   \def\cH{{\mathcal H}}   
      \def\cL{{\mathcal L}}
\def\cS{{\mathcal S}}
\def\cal H{{\mathcal H}}
\def\Q{\mathbb{Q}}
\def\R{\mathbb{R}}
\def\C{\mathbb{C}}
\def\N{\mathbb{N}}
\def\ran{{\text{\rm ran\,}}}
\def\dom{{\text{\rm dom\,}}}
\def\phi{\varphi}
\DeclareMathOperator{\cyc}{cyc}
\DeclareMathOperator{\Res}{Res}
\DeclareMathOperator{\spann}{span}
\DeclareMathOperator{\rank}{rank}
\newtheorem{theorem}{Theorem}[section]
\newtheorem*{thm*}{Theorem}
\newtheorem{proposition}[theorem]{Proposition}
\newtheorem{corollary}[theorem]{Corollary}
\newtheorem{lemma}[theorem]{Lemma}
\theoremstyle{definition}
\newtheorem{definition}[theorem]{Definition}
\newtheorem{example}[theorem]{Example}
\newtheorem{remark}[theorem]{Remark}
\newtheorem*{ack}{Acknowledgements}
\numberwithin{equation}{section}
\title[Recovering quantum graph spectrum from vertex data]{Recovering quantum graph spectrum from vertex data}
\author{Jonathan Rohleder}
\address{Institut f\"ur Numerische Mathematik \\
Technische Universit\"at Graz \\
Steyrergasse 30\\
A-8010 Graz\\
Austria}
\email{rohleder@tugraz.at}
\begin{document}

\begin{abstract}
We study the question to what extent spectral information of a Schr\"odinger operator on a finite, compact metric graph subject to standard or $\delta$-type matching conditions can be recovered from a corresponding Titchmarsh--Weyl function on the boundary of the graph. In contrast to the case of ordinary or partial differential operators, the knowledge of the Titchmarsh--Weyl function is in general not sufficient for recovering the complete spectrum of the operator (or the potentials on the edges). However, it is shown that those eigenvalues with sufficiently high (depending on the cyclomatic number of the graph) multiplicities can be recovered. Moreover, we prove that under certain additional conditions the Titchmarsh--Weyl function contains even the full spectral information.
\end{abstract}

\maketitle

\section{Introduction}

Quantum graphs, i.e.\ differential operators on metric graphs, provide mathematical models for a wide range of problems in physics and engineering such as, e.g., quantum wires, photonic crystals, or thin waveguides; see the recent monograph~\cite{BK13} and the references therein. Therefore their investigation has been a very active field of research in recent time. Amongst others, there is a strong interest in inverse problems for quantum graphs, see, e.g,~\cite{AK08,BW05,CIM98,D13,GS01,KN05,P07} for a small selection.

It is a question of particular interest how much information on a Schr\"odinger operator on a finite, compact metric graph~$G$ with an integrable potential $q : G \to \R$ can be recovered from an appropriate Titchmarsh--Weyl function corresponding to the Schr\"odinger equation on the graph. The Titchmarsh--Weyl matrix function~$M_B$ for a given set of boundary vertices (i.e. vertices of degree one) $B = \{v_1, \dots, v_m\} \subset \partial G$ of the graph can be defined by the relation
\begin{align}\label{eq:MIntro}
 M_B (\mu) \begin{pmatrix} \partial_\nu f (v_1) \\ \vdots \\ \partial_\nu f (v_m) \end{pmatrix} = \begin{pmatrix} f (v_1) \\ \vdots \\ f (v_m) \end{pmatrix},
\end{align}
where $f$ is any square-integrable function on~$G$ with $- f'' + q f = \mu f$ on each edge such that~$f$ is continuous at each vertex and satisfies a standard matching condition at every vertex which does not belong to $B$; here $\partial_\nu f (v)$ denotes the derivative of $f$ at~$v$ in the direction pointing outwards; see Section~\ref{sec:prelim1} for all details. The $|B| \times |B|$-matrix $M_B (\mu)$ is well-defined for each $\mu$ outside the purely discrete spectrum of the selfadjoint Schr\"odinger operator~$A$ in $L^2 (G)$ acting as $- \frac{d^2}{d x^2} + q$ on each edge and equipped with standard (also called Kirchhoff) matching conditions at all vertices of the graph. The matrix function~$M_B$ appears as a natural starting point of the inverse problem since it can be measured in boundary experiments; cf.~\cite{K11}.

In contrast to the case of a Schr\"odinger operator on an interval or on a domain in~$\R^n$, where the eigenvalues coincide with the poles of the corresponding Titchmarsh--Weyl function and its multiplicities equal the rank of the corresponding residue, in general the function~$M_B$ does not contain the complete information of the Schr\"odinger equation on the graph. It does not determine the potential~$q$ or the spectrum of the selfadjoint operator $A$ in~$L^2 (G)$ uniquely, except for very special cases such as, e.g., if $G$ is a tree or if additional information is provided; cf.~\cite{AK08,BW05,EK11,FY07,K09,K11}. 

The aim of this paper is to study which parts of the spectrum of~$A$ can be recovered, nevertheless, from the knowledge of the Titchmarsh--Weyl function~$M_B$, depending on the choice of the vertex set~$B$ on which the boundary data is assumed to be available. Besides discussing several counterexamples, we prove two main positive results. In Theorem~\ref{thm:generalBoundary} we show that it is possible to recover from the Titchmarsh--Weyl function all eigenvalues with sufficiently large multiplicities. More precisely, we show that all those eigenvalues of~$A$ appear as poles of the Titchmarsh--Weyl function whose multiplicities are strictly larger than $2 \cyc (G) + |\partial G| - |B| - 1$, where $\cyc (G)$ is the cyclomatic number of~$G$; see Section~\ref{sec:boundary} below for the details. In addition, we provide estimates which relate the 
multiplicity of a given eigenvalue~$\lambda$ of~$A$ to the rank of the 
residue of~$M_B$ at~$\lambda$. Moreover, in two corollaries we point out how the result reads for special choices of the graph or of the vertex set~$B$. We remark that, in contrast to the Schr\"odinger equation on a single interval, eigenvalues with high multiplicities do appear in many cases.

The second main result treats the case where the Titchmarsh--Weyl function is given for a larger vertex set~$B$ which includes interior vertices; for an interior vertex~$v$ the term $\partial_\nu f (v)$ in~\eqref{eq:MIntro} has to be understood as the sum of the derivatives at~$v$ of the restrictions of~$f$ to the edges attached to~$v$. 
In Theorem~\ref{thm:resonanceSingle} we assume that the set~$B$ contains all boundary vertices as well as, roughly speaking, the vertices which belong to the cycles of the graph; this is specified in a precise manner in Section~\ref{sec:large} below. Under these assumptions we show that it is possible to recover from~$M_B$ all eigenvalues of~$A$ and its multiplicities, provided that, additionally, a non-resonance condition on the quantum graph is satisfied. For the special case of a graph with one cycle this condition appeared earlier in~\cite{K09}; cf.~also~\cite{P07}. If the potential is absent, that is, $A$ acts as the Laplacian on~$G$, the result reads as follows: If each cycle in~$G$ contains two edges with rationally independent lengths then~$M_B$ carries the full spectral information of~$A$. A condition of this type seems natural; it is required, for example, in order to have a unique solution in the inverse problem of recovering the geometry of a metric graph from the 
spectrum of the corresponding Laplacian, see~\cite{KN05,N07}.

The effect that the Titchmarsh--Weyl function~$M_B$ in general does not detect all eigenvalues is due to the existence of so-called scars, i.e., eigenfunctions whose support does not contain the boundary of the graph (or which vanish on the observed vertex set~$B$, respectively); cf.~the various examples in Section~\ref{sec:boundary} and~\ref{sec:large}. For the case of a quantum graph without potentials it was observed in~\cite{SK03} that scars can only exist if the graph possesses periodic orbits with commensurate edges; this is in accordance with Corollary~\ref{cor:LaplaceEV} below. A characterization of scars by means of semiclassical measures was provided recently in~\cite{C14}. Scars are closely related to the corresponding scattering matrix which is obtained when attaching infinite leads to the compact graph; they appear if resonances, i.e.\ poles of the scattering matrix, hit the real axis. Such resonances cannot be noticed in a scattering experiment where only the transmission and reflection are 
measured, see, e.g.,~\cite{WS13}. For related work we refer the reader to~\cite{BKW04,E14,EL10,
GSS13,
KS04,TM01} and the references therein. Taking into account that the Titchmarsh--Weyl matrix and the scattering matrix are basically related to each other by a Cayley transform, see, e.g.,~\cite[Section~5.4]{BK13}, this is in accordance with the fact that those eigenvalues of~$A$ may fail to materialize as poles of the function~$M_B$. Therefore the main issue in the proofs of our main results is to estimate from above the number of linearly independent scars corresponding to an eigenvalue, or to exclude scars totally, respectively.

The results of this paper are not restricted to standard matching conditions. In fact, everything is carried out for the more general case of matching conditions of $\delta$-type, where the standard conditions are contained as the special case that the strength of the~$\delta$-interaction is zero at each vertex. Moreover, we remark that the results do also remain valid for $\delta'$- and further local vertex conditions, but we do not go into these details. 

Let us mention that there are definitions of a Titchmarsh--Weyl function for quantum graphs which differ substantially from ours. For certain purposes it can be convenient to use the orthogonal sum of the Titchmarsh--Weyl functions of all the single edges instead of the function defined in~\eqref{eq:MIntro}. This leads to a function whose values are matrices of the much larger size $2 r \times 2 r$, where~$r$ is the number of edges of the graph. This larger matrix function does always contain the complete spectral information, see, e.g.,~\cite{BL10,CW09}, but the knowledge of that functions requires full information on the boundary values of the solutions on each single edge, which may be unavailable.

\begin{ack}
The author is grateful to B.~Malcolm~Brown for helpful comments on an early draft of this paper and to Jussi Behrndt, Hannes Gernandt, Christian K\"uhn, Pavel Kurasov, Daniel Lenz, and Olaf Post for stimulating discussions on related subjects. Furthermore, the author wishes to thank the anonymous referees for their valuable hints and suggestions.
\end{ack}

\section{Quantum graphs with standard or $\delta$-matching conditions and Titchmarsh--Weyl functions}\label{sec:prelim1}

In this paragraph we fix some notation and recall basic facts on selfadjoint Schr\"odinger operators on finite, compact metric graphs with standard and $\delta$-type matching conditions. For further details we refer the reader to the recent monographs~\cite{BK13,P12} and to~\cite{KS99,KS06,K08} and their references. 

A finite, compact metric graph~$G$ is a collection of finitely many compact intervals $I_j = [0, L_j]$, with $L_j > 0$, $j = 1, \dots, r$, with an equivalence relation on the set of endpoints of these intervals. The intervals $I_1, \dots, I_r$ are called edges and the equivalence classes $v_1, \dots, v_s$ of endpoints are called vertices. In the following we write $o (I_j)$ for the vertex from which the edge $I_j$ originates, that is, the vertex which corresponds to the zero endpoint of~$I_j$, and $t (I_j)$ for the vertex at which~$I_j$ terminates, that is, which corresponds to the endpoint $L_j$, $j = 1, \dots, r$. We say that $I_j$ and $I_k$ are adjacent edges if they are attached to a joint vertex. Moreover, we define the degree $\deg (v_i)$ of a vertex~$v_i$, $i = 1, \dots, s$, to be the cardinality of the equivalence class~$v_i$, that is, the number of edges attached to~$v_i$. Note that with this definition a loop, i.e. an edge~$I_j$ with~$o (I_j) = t (I_j)$, counts twice for the calculation of 
the degree. We denote by~$\partial G$ the boundary of~$G$, that is, the set of vertices of~$G$ with degree one.

A function~$f : G \to \C$ is understood as collection of $r$ functions $f_j : I_j \to \C$, $j = 1, \dots, r$. Accordingly we set
\begin{align*}
 L^2 (G) = \bigoplus_{j = 1}^r L^2 (I_j),
\end{align*}
equipped with the standard norm and inner product, where we denote the latter by~$(\cdot, \cdot)$. Moreover, we say that a function $f : G \to \C$ is continuous on~$G$ if $f_j : I_j \to \C$ is continuous for~$j = 1, \dots, r$ and for each two adjacent edges $I_j$ and $I_k$ with joint vertex~$v_i$, $i \in \{1, \dots, s\}$, the values of $f_j$ and $f_k$ at~$v_i$ coincide. If~$f$ is continuous on~$G$ we just write $f (v_i)$ for the value of an arbitrary component of $f$ at~$v_i$.

Let $q_j \in L^1 (I_j)$ be real-valued functions, $j = 1, \dots, r$, and consider the Schr\"odin\-ger differential expression on $G$ formally given by
\begin{align*}
 (\cL f)_j= - f_j'' + q_j f_j \quad \text{on} \quad I_j, \qquad j = 1, \dots, r.
\end{align*}
The selfadjoint operators under consideration associated with~$\cL$ satisfy vertex conditions of $\delta$-type of the form
\begin{align}\label{eq:MatchCond}
 f~\text{is~continuous~on}~G \quad \text{and} \quad \partial_\nu f (v_i) - \alpha_i f (v_i) = 0, \quad i = 1, \dots, s,
\end{align}
where $\alpha = (\alpha_1, \dots, \alpha_s)$ is a vector with real entries, to be considered as the strength of the~$\delta$-interaction, and
\begin{align*}
 \partial_\nu f (v_i) = \sum_{t (I_j) = v_i} f_j' (L_j) - \sum_{o (I_j) = v_i} f_j' (0), \quad i = 1, \dots, s;
\end{align*}
if~$\deg (v_i) = 1$ then this expression reduces to one summand. In the particular case $\alpha = 0$,~\eqref{eq:MatchCond} equals the usual standard (or Kirchhoff) matching conditions. For brevity let us set
\begin{align*}
 AC^2 (G) := \left\{ f : G \to \C : f_j, f_j'~\text{absolutely~continuous~on}~(0, L_j), j = 1, \dots, r \right\}.
\end{align*}
The operators in~$L^2 (G)$ corresponding to the vertex conditions~\eqref{eq:MatchCond} are defined by
\begin{align}\label{eq:Aalpha}
\begin{array}[]{rcl}
 A_\alpha f & \!\!\!\! = \!\!\!\! & \cL f, \vspace{2mm} \\
 \dom A_\alpha & \!\!\!\! = \!\!\!\! & \left\{ f \in L^2 (G) \cap AC^2 (G) : \cL f \in L^2 (G), f~\text{satisfies}~\eqref{eq:MatchCond} \right\}.
\end{array}
\end{align}
We collect basic properties of~$A_\alpha$ in the following proposition. They can be shown by using standard techniques; cf.~\cite[Chapter~1]{BK13}. For properties of analytic and meromorphic operator functions we refer the reader to the short exposition given in the appendix of this paper.

\begin{proposition}\label{prop:operator}
For each~$\alpha \in \R^s$ the following assertions hold.
\begin{enumerate}
 \item The operator $A_\alpha$ in~\eqref{eq:Aalpha} is selfadjoint in~$L^2 (G)$ and semibounded below (with a lower bound depending on~$\alpha$ and the potentials~$q_j$, $j = 1, \dots, r$).
 \item The resolvent~$(A_\alpha - \mu)^{-1}$ is a compact operator in~$L^2 (G)$ for each~$\mu \in \C \setminus \sigma (A_\alpha)$, where~$\sigma (A_\alpha)$ denotes the spectrum of~$A_\alpha$. In particular, $\sigma (A_\alpha)$ consists of isolated eigenvalues with finite multiplicities, which accumulate to~$+ \infty$.
 \item The resolvent $\mu \mapsto (A_\alpha - \mu)^{-1}$ is a meromorphic operator function with poles of order one precisely at the eigenvalues of~$A_\alpha$.
\end{enumerate}
\end{proposition}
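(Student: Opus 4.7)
The plan is to establish all three items via the standard sesquilinear form approach. For (i), I would introduce the form
\begin{align*}
\mathfrak{t}_\alpha[f,g] = \sum_{j=1}^r \int_{I_j}\bigl(f_j'\,\overline{g_j'} + q_j f_j \overline{g_j}\bigr)\d x + \sum_{i=1}^s \alpha_i f(v_i)\overline{g(v_i)}
\end{align*}
on the form domain $\mathfrak{D} := \{f \in \bigoplus_{j=1}^r H^1(I_j) : f \text{ is continuous on } G\}$, and verify that it is densely defined, symmetric, closed and bounded below. The key ingredient is the one-dimensional Sobolev embedding $H^1(I_j) \hookrightarrow C(\overline{I_j})$, which together with standard interpolation yields, for every $\eps > 0$, the infinitesimal point-evaluation estimate $|f_j(x_0)|^2 \le \eps \|f_j'\|^2_{L^2(I_j)} + C_\eps \|f_j\|^2_{L^2(I_j)}$, and consequently the $L^1$-potential bound
\begin{align*}
\biggl| \int_{I_j} q_j |f_j|^2 \d x \biggr| \le \eps \|q_j\|_{L^1} \|f_j'\|^2_{L^2(I_j)} + C_\eps \|q_j\|_{L^1} \|f_j\|^2_{L^2(I_j)}.
\end{align*}
Both the vertex terms and the potential terms are thus infinitesimal form perturbations of the free Dirichlet form $\sum_j \|f_j'\|^2$, so $\mathfrak{t}_\alpha$ is closed and semibounded. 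Kato's first representation theorem then produces a selfadjoint, semibounded operator in $L^2(G)$; integration by parts on each edge, combined with test functions in $\mathfrak{D}$ supported near individual vertices, identifies this operator with the one defined in~\eqref{eq:Aalpha}.

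For (ii), I would reduce the compactness of the resolvent to compactness of the embedding of the form domain into $L^2(G)$. Since $\mathfrak{D}$ embeds continuously into $\bigoplus_j H^1(I_j)$ and each of the finitely many one-dimensional embeddings $H^1(I_j) \hookrightarrow L^2(I_j)$ is compact by the Rellich--Kondrachov theorem, the total embedding is compact. The standard spectral theory for semibounded selfadjoint operators with compact resolvent then gives that $\sigma(A_\alpha)$ consists of isolated eigenvalues of finite multiplicity accumulating only at $+\infty$.

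For (iii), I would invoke the general fact that any selfadjoint operator with compact resolvent admits a spectral decomposition through finite-rank orthogonal projections $P_n$ onto its eigenspaces. In a punctured neighbourhood of each eigenvalue $\lambda_n$ this yields
\begin{align*}
(A_\alpha - \mu)^{-1} = -\frac{P_n}{\mu - \lambda_n} + R_n(\mu),
\end{align*}
with $R_n$ holomorphic at $\lambda_n$, which exhibits $\mu \mapsto (A_\alpha - \mu)^{-1}$ as meromorphic on $\C$ with poles of order one precisely at the eigenvalues of $A_\alpha$.

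The only mildly delicate step is the form-boundedness of the potential in~(i), since $q_j$ is only assumed to lie in $L^1(I_j)$ rather than $L^2$ or $L^\infty$; the remainder is essentially routine functional analysis that exploits the one-dimensional nature of each edge together with the finiteness of the graph.
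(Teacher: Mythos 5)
Your proof is correct and is precisely the standard quadratic-form argument that the paper itself invokes without details (it only cites \cite[Chapter~1]{BK13} and the appendix for the resolvent expansion): the KLMN-type treatment of the $L^1$ potential and the vertex terms via the infinitesimal point-evaluation estimate, the compact embedding of the form domain, and the spectral decomposition for (iii) are exactly the intended route. No gaps; the only step worth writing out carefully in a full version is the identification of the form operator's domain with \eqref{eq:Aalpha}, which you correctly flag and which works since $\cL f\in L^2(G)$ together with $q_jf_j\in L^1(I_j)$ forces $f_j'\in AC(I_j)$.
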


We remark that, in contrast to the case of a Schr\"odinger operator on a single interval, the operators~$A_\alpha$ can have eigenvalues with high multiplicities, depending on the geometry of the graph and the choice of the potentials, see, e.g.,~\cite{KP11}.

Let us next come to the definition of the Titchmarsh--Weyl function corresponding to the operator $A_\alpha$ in~\eqref{eq:Aalpha}. For this we make use of the following lemma.

\begin{lemma}\label{lem:BVPunique}
Let $\alpha \in \R^s$ and let $A_\alpha$ be the selfadjoint operator in~\eqref{eq:Aalpha}. Then for each $\mu \in \C \setminus \sigma (A_\alpha)$ and each $h = (h_1, \dots, h_s) \in \C^s$ there exists a unique solution $f$ of the differential equation $\cL f = \mu f$ such that~$f \in L^2 (G) \cap AC^2 (G)$, $f$ is continuous on~$G$, and
\begin{align}\label{eq:inhomBC}
 \partial_\nu f (v_i) - \alpha_i f (v_i) = h_i, \quad i = 1, \dots, s.
\end{align}
\end{lemma}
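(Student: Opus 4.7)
The plan is to handle uniqueness first and then existence by a standard lift-and-correct construction using the resolvent of $A_\alpha$.

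For uniqueness, suppose $f_1, f_2$ are two solutions with the prescribed properties and set $f = f_1 - f_2$. Then $\cL f = \mu f$, $f \in L^2(G) \cap AC^2(G)$, $f$ is continuous on $G$, and the inhomogeneous condition \eqref{eq:inhomBC} for $f_1$ and $f_2$ subtract to give its homogeneous counterpart $\partial_\nu f(v_i) - \alpha_i f(v_i) = 0$ for every $i$. Comparing with \eqref{eq:MatchCond} and \eqref{eq:Aalpha} shows $f \in \dom A_\alpha$ with $A_\alpha f = \mu f$, so $\mu \notin \sigma(A_\alpha)$ forces $f=0$.

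For existence, the idea is to first construct a (not necessarily $\cL$-eigenfunction) \emph{lift} $g : G \to \C$ satisfying the desired boundary data, then subtract a correction that lives in $\dom A_\alpha$. More precisely, I would build $g$ edgewise as follows: set $g(v_i) := 0$ at every vertex (so continuity on $G$ is automatic), and choose edge-end derivative values $g_j'(0)$ and $g_j'(L_j)$ so that the signed sums appearing in $\partial_\nu g(v_i)$ equal $h_i$. Because each edge endpoint belongs to exactly one equivalence class $v_i$ and therefore contributes to exactly one of the $s$ linear conditions, the resulting linear system on the $2r$ derivative values is trivially solvable (for each $v_i$ just distribute the value $h_i$ among the relevant edge endpoints). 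On each $I_j$ I would then take $g_j$ to be a smooth function (e.g.\ a cubic polynomial) realizing the prescribed boundary values and derivatives; this yields $g \in AC^2(G)$, continuous on $G$, with $\cL g \in L^2(G)$ and $\partial_\nu g(v_i) - \alpha_i g(v_i) = h_i - 0 = h_i$ for all $i$.

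Since $\mu \in \C \setminus \sigma(A_\alpha)$, Proposition~\ref{prop:operator} provides the bounded resolvent $(A_\alpha - \mu)^{-1}$ on $L^2(G)$. Define
\begin{align*}
\tilde f := (A_\alpha - \mu)^{-1}(\cL g - \mu g) \in \dom A_\alpha, \qquad f := g - \tilde f.
\end{align*}
Then $\cL f - \mu f = (\cL g - \mu g) - (A_\alpha - \mu)\tilde f = 0$, so $\cL f = \mu f$. Moreover $f \in L^2(G) \cap AC^2(G)$ and is continuous on $G$, being the difference of two such functions, and combining the inhomogeneous vertex data of $g$ with the homogeneous vertex data of $\tilde f \in \dom A_\alpha$ yields \eqref{eq:inhomBC} for $f$.

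The only step that is more than bookkeeping is the construction of the lift $g$, and even this is essentially free once one exploits the locality of the $\delta$-vertex conditions: each edge-end enters only one of the $s$ linear conditions on $(g_j'(0), g_j'(L_j))_{j=1}^r$, so the lift exists for purely combinatorial reasons and no compatibility condition on $h$ is needed. I expect no substantive obstacle beyond this.
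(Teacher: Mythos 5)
Your argument is the same as the paper's: uniqueness via $f_1 - f_2 \in \ker(A_\alpha - \mu) = \{0\}$, and existence by constructing a lift $g$ with the prescribed vertex data and then correcting it by $(A_\alpha - \mu)^{-1}(\cL - \mu)g$, which lies in $\dom A_\alpha$ and hence carries homogeneous vertex data. The one place where you go beyond the paper --- making the lift $g$ explicit --- contains a step that fails under the stated hypothesis $q_j \in L^1(I_j)$: if $g_j$ is a cubic polynomial, then $q_j g_j$ is in $L^1(I_j)$ but in general not in $L^2(I_j)$ (take $q_j \in L^1 \setminus L^2$ concentrated where $g_j$ is bounded away from zero), so the requirement $\cL g \in L^2(G)$ can fail and the resolvent cannot be applied to $(\cL - \mu)g$. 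This is repairable: on each edge, instead of a polynomial, patch together by smooth cutoffs (equal to $1$ near one endpoint and $0$ near the other) solutions of $-u'' + q_j u = \mu u$ with the desired Cauchy data at the respective endpoints; then $(\cL - \mu)g_j$ consists of terms like $-\chi_j'' u_j - 2\chi_j' u_j'$, which are bounded and hence in $L^2$, while all endpoint values and derivatives can still be prescribed independently. Your combinatorial observation that the $\delta$-conditions decouple over edge-endpoints is correct and is indeed the reason such a lift exists; the paper simply postulates a $g$ with $\cL g \in L^2(G)$ without constructing it. Everything else in your proof is sound.
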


\begin{proof}
Let $\mu$ and~$h$ be fixed as in the statement of the lemma. For the uniqueness, assume that $f_1, f_2 \in L^2 (G) \cap AC^2 (G)$ are two continuous functions on~$G$ satisfying~$\cL f_j = \mu f_j$, $j = 1, 2$, and the vertex conditions~\eqref{eq:inhomBC}. Then $f := f_1 - f_2$ belongs to $\ker (A_\alpha - \mu)$, which, together with~$\mu \in \C \setminus \sigma (A_\alpha)$, implies $f = 0$, that is, $f_1 = f_2$.

For the existence let $g \in L^2 (G) \cap AC^2 (G)$ be an arbitrary continuous function on~$G$ with~$\cL g \in L^2 (G)$ and
\begin{align*}
 \partial_\nu g (v_i) - \alpha_i g (v_i) = h_i, \quad l = 1, \dots, s,
\end{align*}
and define $f := g - (A_\alpha - \mu)^{-1} (\cL - \mu) g \in L^2 (G) \cap AC^2 (G)$. Since $(A_\alpha - \mu)^{-1}$ maps into the domain of~$A_\alpha$ we obtain that $f$ is continuous on~$G$, $(\cL - \mu) f = 0$, and $\partial_\nu f (v_i) - \alpha_i f (v_i) = \partial_\nu g (v_i) - \alpha_i g (v_i)$; the latter yields~\eqref{eq:inhomBC}.
\end{proof}

It is a consequence of Lemma~\ref{lem:BVPunique} that the following definition makes sense. We introduce the (matrix-valued) Titchmarsh--Weyl function acting on an arbitrary subset~$B$ of the vertex set of~$G$ in the following way.

\begin{definition}\label{def:TW}
Let $B := \{v_{i_1}, \dots, v_{i_m}\}$ be a nonempty subset of the set of vertices of~$G$. Moreover, let $\alpha \in \R^s$, let $A_\alpha$ be defined as in~\eqref{eq:Aalpha}, and let $\mu \in \C \setminus \sigma (A_\alpha)$. For $l = i_1, \dots, i_m$ let $f^{(l)} \in L^2 (G) \cap AC^2 (G)$ such that~$\cL f^{(l)} = \mu f^{(l)}$, $f^{(l)}$ is continuous on~$G$, $\partial_\nu f^{(l)} (v_l) - \alpha_i f^{(l)} (v_l) = 1$, and
\begin{align*}
  \partial_\nu f^{(l)} (v_i) - \alpha_i f^{(l)} (v_i) = 0, \quad i = 1, \dots, l - 1, l + 1, \dots, s.
\end{align*}
The {\em Titchmarsh--Weyl matrix} $M_{B, \alpha} (\mu) \in \C^{m \times m}$ is the matrix with the entries
\begin{align*}
 (M_{B, \alpha} (\mu))_{k, l} = f^{(l)} (v_{i_k}), \quad k, l = 1, \dots, m.
\end{align*}
The matrix function $\mu \mapsto M_{B, \alpha} (\mu)$ is called {\em Titchmarsh--Weyl function}.
\end{definition}

Equivalently, the Titchmarsh--Weyl function $M_{B, \alpha}$ satisfies
\begin{align}\label{eq:ND}
 M_{B, \alpha} (\mu) \begin{pmatrix} \partial_\nu f (v_{i_1}) - \alpha_{i_1} f (v_{i_1}) \\ \vdots \\ \partial_\nu f (v_{i_m}) - \alpha_{i_m} f (v_{i_m}) \end{pmatrix} = \begin{pmatrix} f (v_{i_1}) \\ \vdots \\ f (v_{i_m}) \end{pmatrix}, \quad \mu \in \C \setminus \sigma (A_\alpha),
\end{align}
where $f$ is any solution in $L^2 (G) \cap AC^2 (G)$ of $\cL f = \mu f$ such that $f$ is continuous on~$G$ and satisfies $\partial_\nu f (v_i) - \alpha_i f (v_i) = 0$ whenever $i \neq i_1, \dots, i_m$. If $\alpha = 0$ then $M_{B, \alpha} (\mu)$ is a Neumann-to-Dirichlet map for~$\cL - \mu$ on~$G$. We remark that often the Titchmarsh--Weyl function is considered to be the function $\mu \mapsto (M_{B, \alpha} (\mu))^{-1}$, which, clearly, carries the same spectral information as our function~$M_{B, \alpha}$. However, in order to formulate our main results the latter will turn out to be more convenient.

In the following proposition we collect further properties of~$M_{B, \alpha}$ and establish its basic connection to the eigenvalues of~$A_\alpha$. Here and further on we write $\Res_\lambda M_{B, \alpha}$ for the residue of the (meromorphic) matrix function $M_{B, \alpha}$ at some~$\lambda \in \R$, with the reasonable convention $\Res_\lambda M_{B, \alpha} = 0$ if $M_{B, \alpha}$ is either analytic at~$\lambda$ (i.e. $\lambda \in \R \setminus \sigma (A_\alpha)$) or can be continued analytically into~$\lambda$; cf.~Appendix.

\begin{proposition}\label{prop:entscheidend}
Let $B = \{v_{i_1}, \dots, v_{i_m}\}$ be a nonempty set of vertices of~$G$. Moreover, let $\alpha \in \R^s$ and let~$A_\alpha$ be defined in~\eqref{eq:Aalpha}. Then the matrix function $\mu \mapsto M_{B, \alpha} (\mu)$ in Definition~\ref{def:TW} is analytic on~$\C \setminus \sigma (A_\alpha)$ and each (non-removable) singularity of~$M_{B, \alpha}$ is a pole of order one. Moreover, for each~$\lambda \in \R$ the linear mapping
\begin{align}\label{eq:gammaLambda}
 \gamma_\lambda : \ker (A_\alpha - \lambda) \to \C^m, \quad f \mapsto \begin{pmatrix} f (v_{i_1}) \\ \vdots \\ f (v_{i_m}) \end{pmatrix},
\end{align}
satisfies $\ran \gamma_\lambda = \ran \Res_\lambda M_{B, \alpha}$. In particular,
\begin{align*}
 \dim \ker (A_\alpha - \lambda) = \rank \Res_\lambda M_{B, \alpha} + \dim \ker \gamma_\lambda.
\end{align*}
\end{proposition}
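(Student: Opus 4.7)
The plan is to base everything on the explicit representation of the solutions $f^{(l)}$ furnished in the proof of Lemma~\ref{lem:BVPunique}. For each $l = 1, \dots, m$ I would fix once and for all a $\mu$-independent continuous function $g_l \in L^2(G) \cap AC^2(G)$ with $\cL g_l \in L^2(G)$ and $\partial_\nu g_l(v_i) - \alpha_i g_l(v_i) = \delta_{i, i_l}$. Then for every $\mu \in \C \setminus \sigma(A_\alpha)$ the unique solution from Definition~\ref{def:TW} may be written as
\begin{equation*}
  f^{(l)}_\mu \;=\; g_l \;-\; (A_\alpha - \mu)^{-1} (\cL - \mu) g_l.
\end{equation*}
By Proposition~\ref{prop:operator}(iii), $\mu \mapsto (A_\alpha - \mu)^{-1}$ is a meromorphic operator function on $\C$, analytic on $\C \setminus \sigma(A_\alpha)$ with only simple poles at the eigenvalues of $A_\alpha$. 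Equipping $\dom A_\alpha$ with the graph norm, point evaluation $h \mapsto h(v_{i_k})$ is a bounded linear functional, so each entry $(M_{B,\alpha}(\mu))_{k,l} = f^{(l)}_\mu(v_{i_k})$ inherits analyticity on $\C \setminus \sigma(A_\alpha)$, and any singularity is at worst a simple pole.

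To pin down the residue, fix $\lambda \in \sigma(A_\alpha)$, let $P_\lambda$ be the orthogonal projection onto $\ker(A_\alpha - \lambda)$, and pick an orthonormal basis $\psi_1, \dots, \psi_d$ of that eigenspace with $d := \dim \ker(A_\alpha - \lambda)$. Near $\lambda$ the resolvent has the Laurent expansion
\begin{equation*}
  (A_\alpha - \mu)^{-1} \;=\; \frac{P_\lambda}{\lambda - \mu} + R(\mu), \qquad R \text{ analytic at } \lambda,
\end{equation*}
so extracting the residue reduces to identifying $P_\lambda(\cL - \mu) g_l = P_\lambda \cL g_l - \mu\,P_\lambda g_l$. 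The key input is Green's identity, applied edge by edge and regrouped by vertex. Since both $g_l$ and $\psi_j$ are continuous at every vertex, since $\psi_j \in \dom A_\alpha$ satisfies $\partial_\nu \psi_j(v_i) = \alpha_i \psi_j(v_i)$, and since $\partial_\nu g_l(v_i) - \alpha_i g_l(v_i) = \delta_{i, i_l}$, all cross-terms involving $\alpha_i$ cancel and one is left with
\begin{equation*}
  (\cL g_l, \psi_j) - (g_l, \cL \psi_j) \;=\; -\,\overline{\psi_j(v_{i_l})}.
\end{equation*}
Combined with $\cL \psi_j = \lambda \psi_j$ this gives $(\cL g_l, \psi_j) = \lambda(g_l, \psi_j) - \overline{\psi_j(v_{i_l})}$, whence
\begin{equation*}
  P_\lambda(\cL - \mu) g_l \;=\; (\lambda - \mu)\,P_\lambda g_l \;-\; \sum_{j = 1}^d \overline{\psi_j(v_{i_l})}\,\psi_j.
\end{equation*}

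Substituting back into $f^{(l)}_\mu$ and cancelling the $(\lambda - \mu)$ factors, the only singular contribution is $-\frac{1}{\mu - \lambda}\sum_{j=1}^d \overline{\psi_j(v_{i_l})}\,\psi_j$; in particular the pole is simple. Writing $\Psi \in \C^{m \times d}$ for the matrix with entries $\Psi_{k,j} := \psi_j(v_{i_k})$ and evaluating the residue at $v_{i_k}$ yields
\begin{equation*}
  \Res_\lambda M_{B,\alpha} \;=\; -\,\Psi \Psi^{*}.
\end{equation*}
Since $\Psi$ sends a coefficient vector $(c_j)_j$ to $\gamma_\lambda\!\bigl(\sum_j c_j \psi_j\bigr)$ and $\{\psi_j\}$ is a basis of $\ker(A_\alpha - \lambda)$, we have $\ran \Psi = \ran \gamma_\lambda$; and the standard identity $\ker(\Psi \Psi^{*}) = \ker \Psi^{*} = (\ran \Psi)^\perp$ gives $\ran(\Psi \Psi^{*}) = \ran \Psi$. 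Thus $\ran \Res_\lambda M_{B,\alpha} = \ran \gamma_\lambda$, and the dimensional formula is the rank-nullity theorem applied to $\gamma_\lambda$. The main obstacle is the Green's-identity bookkeeping: keeping track of the sign conventions for $\partial_\nu$, of the $\delta$-type conditions on $\psi_j$ and on $g_l$, and of the resulting cancellation down to the single $\delta_{i, i_l}$ term is where the proof really sits; everything else is either a general fact about meromorphic resolvents or elementary linear algebra.
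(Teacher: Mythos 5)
Your proposal is correct, and it reaches the conclusion by a genuinely more direct route than the paper. The paper first establishes the global Krein-type identity
\begin{align*}
 M_{B, \alpha} (\mu) = M_{B, \alpha} (\zeta)^* + (\mu - \overline{\zeta})\, \cS_\zeta^* \left( I + (\mu - \zeta) (A_\alpha - \mu)^{-1} \right) \cS_\zeta
\end{align*}
for the solution operator $\cS_\zeta$ at a fixed regular point $\zeta$, reads off analyticity and the simple-pole structure from the resolvent, and then identifies $\Res_\lambda M_{B,\alpha} = -(\lambda - \zeta)\gamma_\lambda P_\lambda \cS_\zeta$, proving $\ran \gamma_\lambda P_\lambda \cS_\zeta = \ran \gamma_\lambda$ by a separate duality argument ($f \perp \{P_\lambda \cS_\zeta h\}$ iff $f \in \ker \gamma_\lambda$). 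You instead expand the resolvent in a Laurent series at $\lambda$ applied to a fixed lifting $g_l$ and obtain the fully explicit residue $\Res_\lambda M_{B,\alpha} = -\Psi\Psi^*$ with $\Psi_{k,j} = \psi_j(v_{i_k})$, after which the range identity is immediate linear algebra. The decisive computational ingredient is the same in both arguments: Green's identity regrouped by vertices, with the $\delta$-coupling terms cancelling because $\alpha_i$ is real, leaving exactly the single term $-\overline{\psi_j(v_{i_l})}$ (your formula agrees with the paper's, since $(\lambda-\zeta)P_\lambda \cS_\zeta = (\gamma_\lambda P_\lambda)^*$ for real $\lambda$). What your version buys is an explicit Gram-matrix formula for the residue and a shorter endgame; what the paper's version buys is the identity \eqref{eq:MResolvent} itself, which exhibits $M_{B,\alpha}$ as an operator-valued Nevanlinna-type function and is reusable beyond this proposition. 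One small point deserves a word in your write-up: your analyticity argument composes the resolvent with the point evaluation $h \mapsto h(v_{i_k})$, so you need this functional to be bounded on $\dom A_\alpha$ in the graph norm and the resolvent to be analytic into that space; both are standard (the latter since $A_\alpha(A_\alpha-\mu)^{-1} = I + \mu(A_\alpha-\mu)^{-1}$), but the paper deliberately avoids the issue by only ever pairing with $L^2$ elements through $\cS_\zeta^*$, so you should justify it rather than leave it implicit.
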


\begin{proof}
{\bf Step 1.} In this first step we establish the identity
\begin{align}\label{eq:MResolvent}
 M_{B, \alpha} (\mu) = M_{B, \alpha} (\zeta)^* + (\mu - \overline{\zeta}) \cS_\zeta^* \left( I + (\mu - \zeta) (A_\alpha - \mu)^{-1} \right) \cS_\zeta, \quad \zeta, \mu \in \C \setminus \sigma (A_\alpha),
\end{align}
where $\cS_\zeta : \C^m \to L^2 (G)$ is the solution operator which is defined by the identity
\begin{align*}
 \cS_\zeta \begin{pmatrix} \partial_\nu f (v_{i_1}) - \alpha_{i_1} f (v_{i_1}) \\ \vdots \\ \partial_\nu f (v_{i_m}) - \alpha_{i_m} f (v_{i_m}) \end{pmatrix} = f
\end{align*}
for each solution $f$ in $L^2 (G) \cap AC^2 (G)$ of $\cL f = \zeta f$ such that $f$ is continuous on~$G$ and satisfies $\partial_\nu f (v_i) - \alpha_i f (v_i) = 0$ whenever $i \neq i_1, \dots, i_m$; note that $\cS_\zeta$ is well-defined for each~$\zeta \in \C \setminus \sigma (A_\alpha)$ by Lemma~\ref{lem:BVPunique} and that~$\cS_\zeta$ is related to the Titchmarsh--Weyl matrix $M_{B, \alpha} (\zeta)$ via
\begin{align*}
 M_{B, \alpha} (\zeta) h = \begin{pmatrix}
                            (\cS_\zeta h) (v_{i_1}) \\ \vdots \\ (\cS_\zeta h) (v_{i_m})
                           \end{pmatrix}, \quad h \in \C^m;
\end{align*}
cf.~\eqref{eq:ND}. In order to verify~\eqref{eq:MResolvent}, let $\zeta, \mu \in \C \setminus \sigma (A_\alpha)$ and~$h, k \in \C^m$. First of all, define a function
\begin{align*}
 f_{(h)} = \big( I + (\mu - \zeta) (A_\alpha - \mu)^{-1} \big) \cS_\zeta h.
\end{align*}
Observe that $f_{(h)}$ belongs to $L^2 (G) \cap AC^2 (G)$ with
\begin{align*}
 (\cL - \mu) f_{(h)} & = (\cL - \mu + \mu - \zeta) \cS_\zeta h  = 0,
\end{align*}
that~$f_{(h)}$ is continuous on~$G$, and that $\partial_\nu f_{(h)} (v_{i_l}) - \alpha_{i_l} f_{(h)} (v_{i_l}) = h_l$, $l = 1, \dots, m$, and $\partial_\nu f_{(h)} (v_i) - \alpha_i f_{(h)} (v_i) = 0$ whenever $i \neq i_1, \dots, i_m$. Since the function~$\cS_\mu h$ has the same properties, the uniqueness assertion in Lemma~\ref{lem:BVPunique} implies $f_{(h)} = \cS_\mu h$. Hence,
\begin{align}\label{eq:schlimmSchlimm}
 (\mu - \overline \zeta) \left( \left[ I + (\mu - \zeta) (A_\alpha - \mu)^{-1} \right] \cS_\zeta h, \cS_\zeta k \right) & = (\mu - \overline \zeta) (\cS_\mu h, \cS_\zeta k) \nonumber \\
 & = (\cL \cS_\mu h, \cS_\zeta k) - (\cS_\mu h, \cL \cS_\zeta k).
\end{align}
Integration by parts yields for $f = \cS_\mu h$ and $g = \cS_\zeta k$
\begin{align}\label{eq:weiterGehts}
 (\cL \cS_\mu h, \cS_\zeta k) - (\cS_\mu h, \cL \cS_\zeta k) & = - \sum_{j = 1}^r \int_0^{L_j} f_j'' \overline{g_j} d x + \sum_{j = 1}^r \int_0^{L_j} f_j \overline{g_j''} d x \nonumber \\
 & = \sum_{j = 1}^r \Big( - f_j' (L_j) \overline{g_j (L_j)} + f_j' (0) \overline{g_j (0)} \nonumber \\
 & \quad + f_j (L_j) \overline{g_j' (L_j)} - f_j (0) \overline{g_j' (0)} \Big).
\end{align}
On the other hand, taking into account that $\partial_\nu f (v_i) - \alpha_i f (v_i) = 0$ and $\partial_\nu g (v_i) - \alpha_i g (v_i) = 0$ for each $i \neq i_1, \dots, i_m$, we have
\begin{align}\label{eq:malSehen}
 (M_{B,\alpha} (\mu) & h, k)_{\C^m} - (h, M_{B, \alpha} (\zeta) k )_{\C^m} \nonumber \\
 & = \sum_{i = 1}^s f (v_i) \overline{(\partial_\nu g (v_i) - \alpha_i g (v_i))} - \sum_{i = 1}^s (\partial_\nu f (v_i) - \alpha_i f (v_i)) \overline{g (v_i)} \nonumber \\
 & = \sum_{j = 1}^r \Big(f_j (L_j) \overline{g_j' (L_j)} - f_j (0) \overline{g_j' (0)} - f_j' (L_j) \overline{g_j (L_j)} + f_j' (0) \overline{g_j (0)} \Big),
\end{align}
where~$(\cdot, \cdot)_{\C^m}$ denotes the inner product in~$\C^m$. From this and the equations~\eqref{eq:schlimmSchlimm} and~\eqref{eq:weiterGehts} we conclude
\begin{align*}
 (\mu - \overline \zeta) \left( \left[ I + (\mu - \zeta) (A_\alpha - \mu)^{-1} \right] \cS_\zeta h, \cS_\zeta k \right) = (M_{B,\alpha} (\mu) h, k)_{\C^m} - (h, M_{B, \alpha} (\zeta) k )_{\C^m}.
\end{align*}
As $h, k \in \C^m$ were chosen arbitrarily the identity~\eqref{eq:MResolvent} follows.

{\bf Step 2.} In this second step we verify the assertions of the proposition. Recall that the operator function $\mu \mapsto (A_\alpha - \mu)^{-1}$ is analytic on~$\C \setminus \sigma (A_\alpha)$ and its only singularities are poles of order one; cf.~Appendix. Thus by means of~\eqref{eq:MResolvent} the same holds for the matrix function~$M_{B, \alpha}$. It remains to show the identity $\ran \gamma_\lambda = \ran \Res_\lambda M_{B, \alpha}$ for~$\lambda \in \R$. For this let~$\lambda \in \R$ and let~$P_\lambda$ be the orthogonal projection in~$L^2 (G)$ onto~$\ker (A_\alpha - \lambda)$. For $f \in L^2 (G)$,~$\zeta \in \C \setminus \sigma (A_\alpha)$, and $k \in \C^m$ we compute
\begin{align*}
 (\lambda - \overline \zeta) (\cS_\zeta^* P_\lambda f, k)_{\C^m} & = (\lambda P_\lambda f, \cS_\zeta k) - (P_\lambda f, \zeta \cS_\zeta k) \\
 & = (\cL P_\lambda f, \cS_\zeta k) - (P_\lambda f, \cL \cS_\zeta k) = (\gamma_\lambda P_\lambda f, k)_{\C^m},
\end{align*}
where we have used an integration by parts as in the equations~\eqref{eq:weiterGehts} and~\eqref{eq:malSehen} above and the facts that $\partial_\nu (P_\lambda f) (v_i) - \alpha_i (P_\lambda f) (v_i) = 0$ for $i = 1, \dots, s$ and~$\partial_\nu (\cS_\zeta k) (v_i) - \alpha_i (\cS_\zeta k) (v_i) = 0$ for $i \neq i_1, \dots, i_m$. Thus 
\begin{align}\label{eq:Projection}
 (\lambda - \overline \zeta) \cS_\zeta^* P_\lambda = \gamma_\lambda P_\lambda. 
\end{align}
With this knowledge and the relation~\eqref{eq:projection} in the appendix we obtain from~\eqref{eq:MResolvent}
\begin{align}\label{eq:schonEinAnfang}
 \Res_\lambda M_{B, \alpha} = - (\lambda - \overline \zeta) (\lambda - \zeta) \cS_\zeta^* P_\lambda \cS_\zeta = - (\lambda - \zeta) \gamma_\lambda P_\lambda \cS_\zeta.
\end{align}
The assertion on~$\ran \gamma_\lambda$ follows from~\eqref{eq:schonEinAnfang} if we can verify
\begin{align}\label{eq:nunMalLos}
 \{ \gamma_\lambda P_\lambda \cS_\zeta h : h \in \C^m \} = \ran \gamma_\lambda
\end{align}
for each $\zeta \in \C \setminus \sigma (A_\alpha)$. Indeed, let~$\zeta \in \C \setminus \sigma (A_\alpha)$ be fixed. Then for each~$f \in \ker (A_\alpha - \lambda)$ and each~$h \in \C^m$ we have by~\eqref{eq:Projection}
\begin{align*}
 (f, P_\lambda \cS_\zeta h) = (\cS_\zeta^* P_\lambda f, h)_{\C^m} = (\lambda - \overline{\zeta})^{-1} (\gamma_\lambda f, h)_{\C^m},
\end{align*}
which implies that $f \in \ker \gamma_\lambda$ if and only if~$f \in \{ P_\lambda \cS_\zeta h : h \in \C^m \}^\perp$. Thus
\begin{align*}
 \ker (A_\alpha - \lambda) \ominus \ker \gamma_\lambda = \left\{ P_\lambda \cS_\zeta h : h \in \C^m \right\},
\end{align*}
where~$\ominus$ denotes the orthogonal complement in~$\ker (A_\alpha - \lambda)$. Since~$\gamma_\lambda$ is an isomorphism between~$\ker (A_\alpha - \lambda) \ominus \ker \gamma_\lambda$ and~$\ran \gamma_\lambda$, the identity~\eqref{eq:nunMalLos} follows and proves~$\ran \gamma_\lambda = \ran \Res_\lambda M_{B, \alpha}$. Finally, the claimed expression for~$\dim \ker (A_\alpha - \lambda)$ is an immediate consequence of this.
\end{proof}

We remark that the proof of the preceding Proposition is inspired by more abstract considerations involving so-called Weyl or $Q$-functions and their relations to spectral properties of corresponding selfadjoint operators; see~\cite{LT77} as well as, e.g., the more recent contributions~\cite{BR14,L06}. 


\section{Recovering the spectrum from the Titchmarsh--Weyl function on the boundary}\label{sec:boundary}

In this section we study the question to what extent the spectrum of a quantum graph subject to standard or $\delta$-type matching conditions can be recovered from the knowledge of the associated Titchmarsh--Weyl function on the boundary or on parts of the boundary of~$G$. 

Let us start with three examples, which show that, in general, not the complete spectral data can be recovered from the Titchmarsh--Weyl function on~$\partial G$. The first one illustrates the situation when a cycle is present.

\begin{example}\label{ex:counterex}
In this example $G$ is the metric graph consisting of two edges $I_1 = [0, L_1]$ and $I_2 = [0, L_2]$ such that $I_2$ is a loop which is attached to the vertex~$v_2 = t (I_1)$, see Figure~\ref{fig:loop}. Suppose that the ratio of $L_1$ and $L_2$ is irrational, i.e.\ $L_1/L_2 \in \R \setminus \Q$.
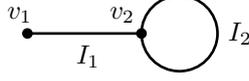
\begin{figure}[h]
\begin{center}
\begin{tikzpicture}
\pgfsetlinewidth{1pt}
\color{black}
\draw (4.5,0) circle (5mm);
\fill (4,0) circle (2pt);
\fill (2.5,0) circle (2pt);
\pgfxyline(2.5,0)(4,0)
\pgfputat{\pgfxy(3.3,-0.4)}{\pgfbox[center,base]{$I_1$}}
\pgfputat{\pgfxy(5.3,-0.1)}{\pgfbox[center,base]{$I_2$}}
\pgfputat{\pgfxy(2.4,0.2)}{\pgfbox[center,base]{$v_1$}}
\pgfputat{\pgfxy(3.75,0.2)}{\pgfbox[center,base]{$v_2$}}
\end{tikzpicture}
\end{center}
\caption{The metric graph~$G$ in Example~\ref{ex:counterex} with a loop.}
\label{fig:loop}
\end{figure}
We assume further that $\cL$ is the Laplacian on~$G$, that is, $q_1 = q_2 = 0$ identically, and that the vertex conditions are standard, that is, $\alpha_1 = \alpha_2 = 0$. Then the numbers $k^2 \pi^2/L_2^2$ with $k = 2, 4, \dots$ belong to the spectrum of the operator $A_\alpha$ in~\eqref{eq:Aalpha} and we will show that these eigenvalues do not appear as poles of the Titchmarsh--Weyl function~$M_{B, 0}$ for~$B = \partial G = \{v_1\}$. In fact, if $f = \binom{f_1}{f_2} \in \ker (A_0 - \lambda)$ with $\lambda = k^2 \pi^2/L_2^2$ for some $k \in \{2, 4, \dots \}$ then $f_2 (x) = a_2 \cos (\sqrt{\lambda} x) + b_2 \sin (\sqrt{\lambda} x)$ for appropriate $a_2, b_2 \in \C$ and, hence, $f_1' (L_1) = \partial_\nu f (v_2) + f_2' (0) - f_2' (L_2) = b_2 \sqrt{\lambda} - b_2 \sqrt{\lambda} = 0$. Together with the matching condition at~$v_1$ it follows that either $f_1 = 0$ or~$f_1$ is an eigenfunction of the Neumann Laplacian on~$[0, L_j]$ with eigenvalue~$\lambda$. In the latter case it follows $l^2 \pi^2/L_1^2 = 
\lambda = k^2 \pi^2/L_2^2$ for 
some $l \in \N$, which implies $\frac{L_1}{L_2} \in \Q$, a contradiction. Thus $f_1 = 0$ identically. Moreover, the continuity condition at~$v_2$ requires $f_2 (0) = f_2 (L_2) = f_1 (L_1) = 0$, which leads to~$a_2 = 0$. Hence
\begin{align}\label{eq:EspaceExample}
 \ker (A_0 - \lambda) = \spann \left\{ \binom{0}{f_2} \right\} \quad \text{with} \quad f_2 (x) = \sin (\sqrt{\lambda} x),~x \in I_2.
\end{align}
In particular, we have $\gamma_\lambda f = f (v_1) = 0$ for each~$f \in \ker (A_0 - \lambda)$ with~$\gamma_\lambda$ defined in~\eqref{eq:gammaLambda}. Thus Proposition~\ref{prop:entscheidend} yields $\Res_\lambda M_{B, 0} = 0$, that is, the function $M_{B, 0}$ can be continued analytically into the eigenvalues $\lambda = k^2 \pi^2/L_2^2$ with $k = 2, 4, \dots$ Consequently, these eigenvalues cannot be found with the help of~$M_{B, 0}$. 

This effect may be explained as follows. For $\mu \in \C \setminus \sigma (A_0)$ the (in the present example scalar) Titchmarsh--Weyl coefficient equals $M_{B, 0} (\mu) = f (v_1)$, where $f$ is the unique solution of $\cL f = \mu f$ in~$L^2 (G) \cap AC^2 (G)$ which is continuous on~$G$ and satisfies $\partial_\nu f (v_1) = 1$ and $\partial_\nu f (v_2) = 0$. If one considers~$\mu \in \sigma (A_0)$ the solution~$f$ is no longer unique since any eigenfunction of~$A_0$ corresponding to~$\mu$ may be added, which possibly changes the value of the solution at~$v_1$. However, in the present example all eigenfunctions corresponding to the eigenvalues $k^2 \pi^2/L_2^2$ with even~$k$ vanish at~$v_1$ so that each solution with the above properties has the same value at~$v_1$. Thus $M_{B, 0} (k^2 \pi^2/L_2^2)$ is well-defined for each even~$k$ and it turns out that this continues $M_{B, 0}$ analytically into these points. 
\end{example}

The next example shows that even for a graph without cycles the function~$M_{B, \alpha}$ may fail to detect all eigenvalues if the set of ``test vertices''~$B$ is too small.

\begin{example}\label{ex:counterex2}
In this example we consider a star graph consisting of three edges $I_1 = [0, 1]$ and $I_2 = I_3 = [0, \pi/2]$ which are glued together at one joint vertex~$v_4 =  t (I_1) = t (I_2) = t (I_3)$, see Figure~\ref{fig:star}.
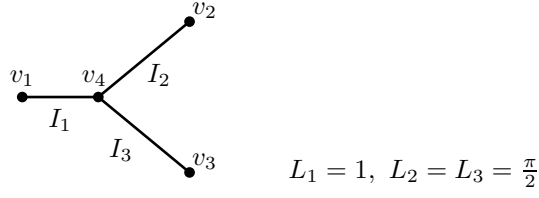
\begin{figure}[h]
\begin{center}
\begin{tikzpicture}
\pgfsetlinewidth{1pt}
\color{black}
\pgfxyline(4,0)(5.2,1);
\pgfxyline(4,0)(5.2,-1);
\fill (4,0) circle (2pt);
\fill (3,0) circle (2pt);
\fill (5.2,1) circle (2pt);
\fill (5.2,-1) circle (2pt);
\pgfxyline(3,0)(4,0)
\pgfputat{\pgfxy(3.5,-0.4)}{\pgfbox[center,base]{$I_1$}}
\pgfputat{\pgfxy(4.8,0.2)}{\pgfbox[center,base]{$I_2$}}
\pgfputat{\pgfxy(4.3,-0.8)}{\pgfbox[center,base]{$I_3$}}
\pgfputat{\pgfxy(3.0,0.2)}{\pgfbox[center,base]{$v_1$}}
\pgfputat{\pgfxy(5.4,1.1)}{\pgfbox[center,base]{$v_2$}}
\pgfputat{\pgfxy(5.4,-0.9)}{\pgfbox[center,base]{$v_3$}}
\pgfputat{\pgfxy(3.95,0.2)}{\pgfbox[center,base]{$v_4$}}
\end{tikzpicture} \hspace{10mm} $L_1 = 1,~L_2 = L_3 = \frac{\pi}{2}$
\end{center}
\caption{The star graph~$G$ in Example~\ref{ex:counterex2}.}
\label{fig:star}
\end{figure}
Consider the Laplacian~$A_0$ on~$G$ with standard matching conditions. Then $k^2 \in \sigma (A_0)$ for $k = 1, 3, \dots$ with
\begin{align*}
 \ker (A_0 - k^2) = \spann \left\{ \begin{pmatrix} 0 \\ g \\ - g \end{pmatrix} \right\}, \quad g (x) = \cos (k x),~x \in \Big[0, \frac{\pi}{2} \Big],
\end{align*}
which can be verified similar to Example~\ref{ex:counterex}.
If one chooses $B = \{v_1\}$ then each $f \in \ker (A_0 - k^2)$ vanishes on~$B$ and the same reasoning as in Example~\ref{ex:counterex} implies that $M_{B, 0}$ can be continued analytically into $\lambda = k^2$, that is, the eigenvalues~$\lambda = k^2$ with $k = 1, 3, \dots$ are invisible for~$M_{B, 0}$.
\end{example}

We remark that in Example~\ref{ex:counterex2} the situation changes fundamentally if~$B$ contains at least two boundary vertices; cf.~Corollary~\ref{cor:tree} below. In this case the matrix function~$M_{B, 0}$ is able to detect all eigenvalues of~$A_0$ with full multiplicities.

In the previous examples the choice of the specific edge lengths played a role, whereas the choice of the differential expression was restricted to the Laplacian. In the following we give another negative example, where the edge lengths are arbitrary, but a potential appears in the differential expression.

\begin{example}\label{ex:counterex3}
Let~$G$ be the graph in Example~\ref{ex:counterex} and Figure~\ref{fig:loop}, now with arbitrary edge lengths $L_1, L_2$. Let further $k_0 \in \{2, 4, \dots\}$ be fixed and consider the (constant) potentials
\begin{align*}
 q_1 = \frac{k_0^2 \pi^2}{L_2^2} - \frac{2 \pi^2}{L_1^2} \quad \text{and} \quad q_2 = 0
\end{align*}
in the differential expression~$\cL$. Then $\lambda = k_0^2 \pi^2/L_2^2$ is an eigenvalue of the operator~$A_0$. In fact, for $f = \binom{f_1}{f_2} \in \ker (A_0 - \lambda)$ one concludes as in Example~\ref{ex:counterex} that
\begin{align*}
 - f_1'' + \left( \frac{k_0^2 \pi^2}{L_2^2} - \frac{2 \pi^2}{L_1^2} \right) f_1 = \frac{k_0^2 \pi^2}{L_2^2} f_1 \quad \text{and} \quad f_1' (0) = f_1' (L_1) = 0.
\end{align*}
As the differential equation reduces to~$- f_1'' = 2 \pi^2 / L_1^2 f_1$, it follows $f_1 = 0$ identically or $2 \pi^2 / L_1^2 = l^2 \pi^2/L_1^2$ for some $l \in \N$. The latter is a contradiction, thus~$f_1 = 0$. It follows further as in Example~\ref{ex:counterex} that $\ker (A_0 - \lambda)$ is given by~\eqref{eq:EspaceExample} and that $M_{B,0}$ can be continued analytically into~$\lambda$ for~$B = \partial G = \{v_1\}$.
\end{example}

In order to formulate the main result of this section we recall the definition of the cyclomatic number of a finite graph. It is clear that this definition does not necessarily require a metric structure on the graph.

\begin{definition}
Let~$G$ be a finite, compact, connected metric graph consisting of~$r$ edges and~$s$ vertices. Then
\begin{align*}
 \cyc (G) := 1 + r - s
\end{align*}
is called the {\em cyclomatic number} of~$G$. 
\end{definition}

The number $\cyc (G)$ is the minimal number of edges that must be removed from~$G$ in order to obtain a tree (i.e. a metric graph without cycles). In that sense it counts the cycles of~$G$.

The following main result of this section states that the Titchmarsh--Weyl function for a vertex set $B \subset \partial G$ can recover all eigenvalues with sufficiently large multiplicities, depending on the size of~$\partial G \setminus B$ and the cyclomatic number of~$G$. Concerning the distinction of cases in item~(iii) we refer the reader to Remark~\ref{rem:Rem} below.

\begin{theorem}\label{thm:generalBoundary}
Let~$G$ be a finite, compact, connected metric graph and let $B \subset \partial G$ be nonempty. Moreover, let $\alpha \in \R^s$, let~$A_\alpha$ be the Schr\"odinger operator in~\eqref{eq:Aalpha}, and let~$M_{B, \alpha}$ be the Titchmarsh--Weyl function in Definition~\ref{def:TW}. Then the following assertions hold for each~$\lambda \in \R$.
\begin{enumerate}
 \item If $\lambda$ is a pole of~$M_{B, \alpha}$ then $\lambda$ is an eigenvalue of~$A_\alpha$.
 \item If $\lambda$ is an eigenvalue of~$A_\alpha$ with $\dim \ker (A_\alpha - \lambda) > 2 \cyc (G) + |\partial G| - |B| - 1$ then $\lambda$ is a pole of~$M_{B, \alpha}$.
 \item If $\cyc (G) = 0$, i.e.,~$G$ is a tree, and $B = \partial G$ then
 \begin{align*}
  \dim \ker (A_\alpha - \lambda) = \rank \Res_\lambda M_{B, \alpha}.
 \end{align*}
 Otherwise the estimate
 \begin{align*}
  \rank \Res_\lambda M_{B, \alpha} \leq \dim \ker (A_\alpha - \lambda) \leq \rank \Res_\lambda M_{B, \alpha} + 2 \cyc (G) + |\partial G| - |B| - 1
 \end{align*}
 holds.
\end{enumerate}
\end{theorem}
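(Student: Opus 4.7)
The entire proof rests on the identity
\begin{align*}
 \dim \ker(A_\alpha - \lambda) = \rank \Res_\lambda M_{B,\alpha} + \dim \ker \gamma_\lambda
\end{align*}
from Proposition~\ref{prop:entscheidend}. Item~(i) is immediate: a pole of $M_{B,\alpha}$ forces $\rank \Res_\lambda M_{B,\alpha} \geq 1$, so $\lambda$ is an eigenvalue of $A_\alpha$. The lower bound $\rank \Res_\lambda M_{B,\alpha} \leq \dim \ker(A_\alpha - \lambda)$ in~(iii) is a direct rewriting of the decomposition, and item~(ii) is the contrapositive of the upper bound in~(iii). The whole task thus reduces to proving
\begin{align*}
 \dim \ker \gamma_\lambda \leq 2\cyc(G) + |\partial G| - |B| - 1
\end{align*}
in the general case, which is strengthened to $\ker \gamma_\lambda = \{0\}$ in the excluded tree case $B = \partial G$.

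The key local input is the propagation of zeros at a boundary vertex. If $f \in \ker \gamma_\lambda$ and $v \in B \subset \partial G$, then $v$ has degree one and the $\delta$-condition at $v$ reads $\partial_\nu f(v) = \alpha_v f(v) = 0$; combined with $f(v)=0$ this produces vanishing Cauchy data on the unique edge $e_v$ attached to $v$, so uniqueness for the ODE $\cL f = \lambda f$ forces $f \equiv 0$ on $e_v$. Hence every scar vanishes identically on the $|B|$ edges adjacent to $B$ and, by continuity, at the inner endpoints of those edges in $V(G)\setminus B$. When $G$ is a tree and $B = \partial G$ this step iterates: erasing the leaf-edges on which $f=0$ yields a smaller tree whose new leaves satisfy $f=0$ (by continuity with the erased edges), and the $\delta$-conditions at those new leaves remain of $\delta$-type on the reduced tree since the erased edges contribute no derivative to the $\delta$-sum; the Cauchy argument applies again and, by finiteness of the tree, eventually forces $f \equiv 0$. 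This proves the equality in~(iii).

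For the general case the iteration stalls, and one works with the pruned graph $\tilde G := G \setminus \{e_v : v \in B\}$. Every scar then induces an eigenfunction of the Schr\"odinger operator on $\tilde G$ with the induced $\delta$-conditions together with the Dirichlet constraint $f(w)=0$ at each inner endpoint $w$ of a removed edge. A solution count on $\tilde G$ --- $2(r-|B|)$ free coefficients on the remaining edges, balanced against the continuity, $\delta$- and Dirichlet-equations at each vertex --- combined with the topological identity $\cyc(\tilde G) = \cyc(G) + c(\tilde G) - 1$ (with $c(\tilde G)$ the number of connected components of $\tilde G$) and the bookkeeping that relates $|\partial \tilde G|$ to $|\partial G\setminus B|$ plus the interior vertices of $G$ whose degree drops to one after pruning, recovers the bound $2\cyc(G) + |\partial G| - |B| - 1$. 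Heuristically, each independent cycle of $G$ contributes two extra degrees of freedom to the scars (the two solution coefficients on a cycle edge in a spanning-tree decomposition), each boundary vertex outside $B$ contributes one more, and a global $\delta$-type constraint at a designated vertex accounts for the $-1$. The hardest step is the sharp multiplicity bound on each connected component of $\tilde G$ under the extra Dirichlet conditions at the inner endpoints, and the consistent reassembly of these component-wise bounds: one must treat pure-cycle components (where the basic bound is $2\cyc(H)$) differently from components with boundary (where it is $2\cyc(H)+|\partial H|-1$), deal with isolated vertices of $\tilde G$ and inner endpoints shared between several removed edges, and quantify the dimension drop forced by each Dirichlet constraint. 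This combinatorial and linear-algebraic bookkeeping is the technical core of the proof.
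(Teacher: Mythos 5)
Your reduction of the theorem to the single estimate $\dim \ker \gamma_\lambda \leq 2\cyc(G) + |\partial G| - |B| - 1$ (with $\ker\gamma_\lambda=\{0\}$ when $G$ is a tree and $B=\partial G$) is exactly right, as are the derivations of (i), of (ii) as the contrapositive of the upper bound in (iii), and of the lower bound in (iii) from Proposition~\ref{prop:entscheidend}. The propagation of vanishing Cauchy data from a vertex of $B$ along its boundary edge is also correct, and your peeling argument settles the tree case with $B=\partial G$.

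However, the central estimate in the general case is not proved. Your plan is to pass to the pruned graph $\tilde G$ and perform a ``solution count'' of free coefficients against continuity, $\delta$- and Dirichlet-equations. Counting $2(r-|B|)$ unknowns against a list of linear constraints bounds the solution space from \emph{below} by (unknowns minus constraints); to get an \emph{upper} bound on $\dim\ker\gamma_\lambda$ you must exhibit sufficiently many constraints that are actually independent on the solution space, and that is precisely the content of the theorem. You acknowledge this yourself (``the hardest step\dots is the technical core''), but no mechanism for it is supplied, and the heuristic ``each cycle contributes two, each boundary vertex outside $B$ contributes one, a global constraint accounts for the $-1$'' is not a proof. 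The paper's mechanism is different and worth knowing: it is a double induction in which edges are deleted one at a time. The key observation is that for any single edge $I_j$ the restriction map $\ker\gamma_{\lambda,G,B}\ni f\mapsto f_j$ has range of dimension at most $2$ (solution space of a second-order ODE), and at most $1$ if $I_j$ is a boundary edge whose leaf carries a $\delta$-condition. Hence one can choose a basis of $\ker\gamma_{\lambda,G,B}$ in which all but one (resp.\ two) elements vanish identically on $I_j$, and these survive as elements of the corresponding kernel on the graph with $I_j$ removed (or with the relevant vertex added to $B$). For trees one inducts on $|\partial G|-|B|$, losing at most one dimension per added boundary vertex, with the base case $|\partial G|-|B|=1$ handled by your forward-propagation argument (a nonzero scar reaches two distinct boundary vertices with nonvanishing Cauchy data, but only one lies outside $B$). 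For $\cyc(G)\geq 1$ one inducts on $\cyc(G)$, deleting a cycle edge whose removal keeps the graph connected: this costs at most two dimensions and lowers $\cyc$ by one (in the base case it produces a tree with one extra boundary vertex). This yields the bound without any independence analysis of constraint systems. As it stands, your argument establishes the theorem only for trees with $B=\partial G$; the general case remains open in your write-up.
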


\begin{proof}
Throughout this proof we will assume without loss of generality that~$G$ has no vertices of degree two. If~$G$ has vertices of degree two then the two edges attached to each such vertex can be joined to one edge and this procedure, due to the matching conditions~\eqref{eq:MatchCond}, does neither change the multiplicity of an eigenvalue nor the cyclomatic number of~$G$ nor the cardinality of~$\partial G$.

Note first that the assertion~(i) is an immediate consequence of Proposition~\ref{prop:entscheidend}. Indeed, if $\lambda$ is a pole of~$M_{B, \alpha}$ then $\Res_\lambda M_{B, \alpha} \neq 0$ and it follows from Proposition~\ref{prop:entscheidend} that $\ker (A_\alpha - \lambda) \neq \{0\}$. Hence $\lambda$ is an eigenvalue of~$A_\alpha$. 

The following main part of this proof is divided into three steps and will lead to assertions~(ii) and~(iii). We elaborate the details only for the case that $\cyc (G) > 0$ or $B \neq \partial G$; the case that $\cyc (G) = 0$ and~$B = \partial G$ is completely analogous.

{\bf Step 1.} Let $\lambda \in \R$. It is the aim of this step to prove the following: If~$G$ is a finite, compact, connected metric graph with $\cyc (G) = 0$ (i.e.~$G$ is a tree) and $B \subset \partial G$ is nonempty with $B \neq \partial G$ then
\begin{align}\label{eq:bla}
 \dim \ker \gamma_{\lambda, G, B} \leq |\partial G| - |B| - 1
\end{align}
holds, where $\gamma_{\lambda, G, B} = \gamma_\lambda$ is defined as in Proposition~\ref{prop:entscheidend}. We prove this by induction over $l := |\partial G| - |B| \geq 1$.  

Let~$G$ and~$B$ as above such that~$l = 1$. We have to prove $\dim \ker \gamma_{\lambda, G, B} = 0$. Assume that there exists $f \in \ker \gamma_{\lambda, G, B}$ with $f \neq 0$, that is, $f_{j_0} \neq 0$ for some $j_0 \in \{1, \dots, r\}$. Since $f_{j_0}$ satisfies the differential equation $- f_{j_0}'' + q_{j_0} f_{j_0} = \lambda f_{j_0}$ on~$I_{j_0}$, it follows from the standard uniqueness theorem for solutions of linear ordinary differential equations that $f_{j_0} (L_{j_0}) \neq 0$ or $f_{j_0}' (L_{j_0}) \neq 0$. Thus, if $t (I_{j_0})$ is not a boundary vertex of~$G$ then the matching conditions~\eqref{eq:MatchCond} imply that there exists $j_1 \in \{1, \dots, r\}, j_1 \neq j_0$, such that $I_{j_0}$ is attached to~$t (I_{j_0})$ and $f_{j_1} \neq 0$. Indeed, if~$f_{j_0} (L_{j_0}) \neq 0$ then, due to the continuity of~$f$ at~$t (I_{j_0})$, we can choose an arbitrary edge $I_{j_1}$ attached to~$t (I_{j_0})$ which is distinct from~$I_{j_0}$; if~$f_{j_0} (L_{j_0}) = 0$ then~$f_{j_0}' (L_{j_0}) \neq 0$ 
and~\eqref{eq:MatchCond} implies that we can choose~$I_{j_1}$ different from~$I_{j_0}$ such that~$f_{j_0}$ has a non-vanishing derivative at~$t (I_{j_0})$. By the same reasoning, if~$o (I_{j_0})$ is not a boundary vertex of~$G$ then there exists $j_{-1} \in \{1, \dots, r\}$, $j_{-1} \neq j_0$, such that $I_{j_{-1}}$ is attached to~$o (I_{j_0})$ and~$f_{j_{-1}} \neq 0$. Without loss of generality we assume that $t (I_{j_{-1}}) = o (I_{j_0})$ and $o (I_{j_1}) = t (I_{j_0})$. If~$t (I_{j_1})$ is not a boundary vertex of~$G$ then another application of the described procedure yields that there exists~$j_2 \in \{1, \dots, r\}$, $j_2 \neq j_1$, such that~$I_{j_2}$ is attached to~$t (I_{j_1})$ and $f_{j_2} \neq 0$. Similarly, if~$o (I_{j_{-1}})$ is not a boundary vertex of~$G$ then there exists $j_{-2} \in \{1, \dots, r\}$, $j_{-2} \neq j_{-1}$, such that $I_{j_{-2}}$ is attached to~$o (I_{j_{-1}})$ and $f_{j_{-2}} \neq 0$. Proceeding in the same way, since~$G$ is finite and does not contain cycles, there exist 
finite 
numbers~$n, p \in \N$ such that~$o (I_{j_{-n}})$ and $t (I_{j_p})$ are 
distinct boundary vertices and such that~$f_{j_{-n}} \neq 0$ and $f_{j_p} \neq 0$. In particular, $f (o (I_{j_{-n}})) \neq 0$ or $\partial_\nu f (o (I_{j_{-n}})) \neq 0$, and $f (t (I_{j_p})) \neq 0$ or~$\partial_\nu f (t (I_{j_p}
)) \neq 0$. On the other hand, $f \in \ker \gamma_{\lambda, G, B}$ and $|\partial G| - |B| = l = 1$ imply~$f (v) = \partial_\nu f (v) = 0$ for all but one~$v \in \partial G$, a contradiction. Thus~$\ker \gamma_{\lambda, G, B} = \{0\}$, which proves~\eqref{eq:bla} for $l = |\partial G| - |B| = 1$. 

Let now $l \geq 1$ such that~\eqref{eq:bla} is satisfied whenever $|\partial G| - |B| = l$. Moreover, let $G$ and $B$ be chosen in such a way that $|\partial G| - |B| = l + 1$ and let $d := \dim \ker \gamma_{\lambda, G, B}$. We have to show that $d \leq l$. If $d = 0$ or $d = 1$ this is clear. Otherwise, let us choose a boundary vertex~$v_{i_0}$ of~$G$ which does not belong to~$B$ with corresponding edge~$I_{j_0}$; without loss of generality, $v_{i_0} = t (I_{j_0})$. Furthermore, let us choose a basis $\phi^{(1)}, \dots, \phi^{(d)}$ of~$\ker \gamma_{\lambda, G, B}$. Then $\phi^{(1)}, \dots, \phi^{(d)}$ belong to~$\ker (A_\alpha - \lambda)$ and satisfy $\phi^{(l)} (v_{i_1}) = \dots = \phi^{(l)} (v_{i_m}) = 0$, $l = 1, \dots, d$. In particular, due to the matching condition~\eqref{eq:MatchCond} we have~$\phi^{(l) \prime}_{j_0} (L_{j_0}) = \alpha_{i_0} \phi^{(l)}_{j_0} (L_{j_0})$, $l = 1, \dots, d$. Since $- \phi^{(l) \prime \prime}_{j_0} + q_{j_0} \phi^{(l)}_{j_0} = \lambda \phi^{(l)}_{j_0}$ on~$I_{j_0}$, $l = 
1, \dots, d$, it follows 
that the functions $\phi^{(1)}, \dots, \phi^{(d)}$ coincide, up to multiples, on~$I_{j_0}$. Thus, by taking linear combinations, we can 
achieve a new basis $\psi^{(1)}, \dots, \psi^{(d)}$ of~$\ker \gamma_{\lambda, G, B}$ such that $\psi_{j_0}^{(2)} = \psi_{j_0}^{(3)} = \dots = \psi_{j_0}^{(d)} = 0$ identically on~$I_{j_0}$. Let $\widetilde B := B \cup \{v_{
i_0}\}$, so that $|\partial G| - |\widetilde B| = 
l$. Then $\psi^{(2)}, \dots, \psi^{(d)}$ belong to~$\ker \gamma_{\lambda, G, \widetilde B}$ and are linearly independent. Hence
\begin{align*}
 d - 1 \leq \dim \ker \gamma_{\lambda, G, \widetilde B} \leq |\partial G| - |\widetilde B| - 1 = l - 1
\end{align*}
by the induction assumption, which leads to $d \leq l$. Thus we have proved~\eqref{eq:bla}.

{\bf Step 2.} Our aim in this step is to prove that
\begin{align}\label{eq:zwote}
 \dim \ker \gamma_{\lambda, G, B} \leq 2 \cyc (G) + |\partial G| - |B| - 1
\end{align}
holds for any finite, compact, connected metric graph~$G$ with $\cyc (G) \geq 1$ and each nonempty set~$B \subset \partial G$. We do this by induction over $\cyc (G)$. This step is inspired by the proof of~\cite[Theorem~3.2]{KP11}. 

Let first $G$ be a graph with $\cyc (G) = 1$, let $B \subset \partial G$ be nonempty, and let $d : = \dim \ker \gamma_{\lambda, G, B}$. Assume that $d \geq 2$, otherwise the claimed estimate is obvious. There exists an edge $I_{j_1}$ such that the graph obtained from~$G$ by removing the edge~$I_{j_1}$ is still connected and $o (I_{j_1})$ and $t (I_{j_1})$ are both vertices of~$G$ of degree three or larger. Let $T$ be the tree which arises from~$G$ by replacing the vertex $o (I_{j_1})$ by two vertices, one of them with degree one, containing only the left endpoint of~$I_{j_1}$, the other one with degree $\deg (o (I_{j_1})) - 1$, containing all other endpoints which were contained in the original vertex. Then $|\partial T| = |\partial G| + 1$ and $B \subset \partial T$, in particular $B \neq \partial T$. If $\dim \{f_{j_1} : f \in \ker \gamma_{\lambda, G, B}\} \leq 1$ then there exists a basis of~$\ker \gamma_{\lambda, G, B}$ with at most one function being nonzero on~$I_{j_1}$; cf.~Step~1. In this case~$d - 
1$ functions of this basis belong to~$\ker \gamma_{\lambda, T, B}$, where we impose a Neumann boundary condition at the new boundary vertex. If $\dim \{f_{j_1} : f \in \ker \gamma_{\lambda, G, B}\} = 2$ let $\chi^{(1)}, \dots, \chi^{(d)}$ be a basis 
of~$\ker \gamma_{\lambda, G, B}$, chosen in such a way that $\chi_{j_1}^{(3)} = \chi_{j_1}^{(4)} = \dots = \chi_{j_1}^{(d)} = 0$ identically on~$I_{j_1}$ and that
\begin{align*}
 & \chi_{j_1}^{(1)} (0) = 0, \quad \chi_{j_1}^{(1) \prime} (0) = 1, \\
 & \chi_{j_1}^{(2)} (0) = 1, \quad \chi_{j_1}^{(2) \prime} (0) = 0.
\end{align*}
Then the $d - 1$ functions $\chi^{(2)}, \dots, \chi^{(d)}$ belong to~$\ker \gamma_{\lambda, T, B}$. Hence in any case we obtain from~\eqref{eq:bla} 
\begin{align*}
 d - 1 \leq \dim \ker \gamma_{\lambda, T, B} \leq |\partial T| - |B| - 1 = |\partial G| - |B|,
\end{align*}
which leads to~\eqref{eq:zwote} in the case $\cyc (G) = 1$.

Assume that for some $k \geq 1$ the estimate~\eqref{eq:zwote} holds whenever $\cyc (G) = k$. Furthermore, let $G$ be a finite, compact metric graph with $\cyc (G) = k + 1$ and let~$B \subset \partial G$ be nonempty. Our aim is to show
\begin{align}\label{eq:ziel}
 d := \dim \ker \gamma_{\lambda, G, B} \leq 2 (k + 1) + |\partial G| - |B| - 1.
\end{align}
If~$d = 0$ or~$d = 1$ this is again clear. Therefore assume $d \geq 2$. 
Again, since $\cyc (G) = k + 1 \geq 2$ there exists an edge $I_{j_1}$ such that the graph~$G'$ obtained from~$G$ by removing the edge~$I_{j_1}$ is still connected and $o (I_{j_1})$ and $t (I_{j_1})$ are vertices of~$G$ of degree three or larger. Then~$G$ and~$G'$ have the same number of vertices and the same boundary, and $\cyc (G') = \cyc (G) - 1 = k \geq 1$. Since $\dim \{f_{j_1} : f \in \ker \gamma_{\lambda, G, B}\} \leq 2$ there exists a basis of~$\ker \gamma_{\lambda, G, B}$ with at most two functions being nonzero on~$I_{j_1}$, so that~$d - 2$ basis functions belong to~$\ker \gamma_{\lambda, G', B}$. Then the induction assumption implies
\begin{align*}
 d - 2 \leq \dim \ker \gamma_{\lambda, G', B} \leq 2 k + |\partial G'| - |B| - 1 = 2 k + |\partial G| - |B| - 1,
\end{align*}
which proves~\eqref{eq:ziel}. 

{\bf Step 3.} It remains to deduce from Step~2 the assertions~(ii) and~(iii) of the theorem. Indeed, it follows from~\eqref{eq:zwote} with the help of Proposition~\ref{prop:entscheidend} that
\begin{align}\label{eq:jaJa}
 \dim \ker (A_\alpha - \lambda) \leq \rank \Res_\lambda M_{B, \alpha} + 2 \cyc (G) + |\partial G| - |B| - 1.
\end{align}
Moreover, Proposition~\ref{prop:entscheidend} implies
\begin{align*}
 \rank \Res_\lambda M_B \leq \dim \ker (A_\alpha - \lambda).
\end{align*}
From the latter two estimates the assertion~(iii) follows. Finally, if~$\lambda$ is an eigenvalue of~$A_\alpha$ with~$\dim \ker (A_\alpha - \lambda) > 2 \cyc (G) + |\partial G| - |B| - 1$ then~\eqref{eq:jaJa} implies $\rank \Res_\lambda M_{B, \alpha} > 0$. Hence $\lambda$ is a pole of~$M_{B, \alpha}$. This completes the proof of the theorem.
\end{proof}

\begin{remark}\label{rem:Rem}
In Theorem~\ref{thm:generalBoundary} the case that $B = \partial G$ and $\cyc (G) = 0$ is treated separately. In fact, the larger the vertex set~$B$ gets, the more spectral information is contained in the Titchmarsh--Weyl function. However, if $G$ is a tree then the complete spectral information is contained in the Titchmarsh--Weyl function already if~$|\partial G| - |B| = 1$, see Corollary~\ref{cor:tree} below. Thus no additional information can be obtained when the missing vertex is added.
\end{remark}

The following example shows that the estimates in Theorem~\ref{thm:generalBoundary} cannot be improved in general.

\begin{example}\label{ex:counterexlast}
Consider the Laplacian subject to standard matching conditions $\alpha = 0$ on the graph in Figure~\ref{fig:twoloops} with edge lengths $L_1 = L_4 = L_5 = \pi/2, L_3 = \pi$, $L_6 = L_7 = 2 \pi$, and $L_2 = 1$. Assume that $t (I_1) = t (I_2) = o (I_3)$ and $t (I_3) = o (I_4) = o (I_5)$. 
\begin{figure}[h]
\begin{center}
\begin{tikzpicture}
\pgfsetlinewidth{1pt}
\color{black}
\draw (4.5,.7) circle (5mm);
\draw (4.5,-.7) circle (5mm);
\fill (3.5,0) circle (2pt);
\fill (2,0) circle (2pt);
\fill (4.08,0.44) circle (2pt);
\fill (4.08,-0.44) circle (2pt);
\fill (1.42,0.44) circle (2pt);
\fill (1.65,-0.32) circle (2pt);
\pgfxyline(1.42,0.44)(2,0)
\pgfxyline(1.65,-0.32)(2,0)
\pgfxyline(2,0)(3.5,0)
\pgfxyline(3.5,0)(4.08,0.44)
\pgfxyline(3.5,0)(4.08,-0.44)
\pgfputat{\pgfxy(1.85,0.3)}{\pgfbox[center,base]{$I_1$}}
\pgfputat{\pgfxy(2,-0.4)}{\pgfbox[center,base]{$I_2$}}
\pgfputat{\pgfxy(2.7,0.12)}{\pgfbox[center,base]{$I_3$}}
\pgfputat{\pgfxy(3.6,0.3)}{\pgfbox[center,base]{$I_4$}}
\pgfputat{\pgfxy(3.7,-0.5)}{\pgfbox[center,base]{$I_5$}}
\pgfputat{\pgfxy(5.27,0.7)}{\pgfbox[center,base]{$I_6$}}
\pgfputat{\pgfxy(5.25,-1)}{\pgfbox[center,base]{$I_7$}}
\pgfputat{\pgfxy(1.1,0.44)}{\pgfbox[center,base]{$v_1$}}
\pgfputat{\pgfxy(1.3,-0.4)}{\pgfbox[center,base]{$v_2$}}
\end{tikzpicture}
\end{center}
\caption{The metric graph~$G$ in Example~\ref{ex:counterexlast} with two loops.}
\label{fig:twoloops}
\end{figure}
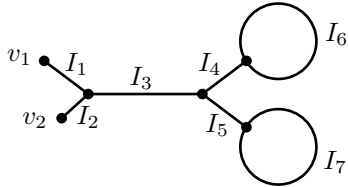
In this case the eigenspace of~$A_0$ corresponding to the eigenvalue $\lambda = 1$ is given by
\begin{align}\label{eq:lastKernel}
 \ker (A_0 - 1) = \left\{ \begin{pmatrix} a \cos \\ 0 \\ - a \sin \\ b \sin \\ (a - b) \sin \\ c \sin + b \cos \\ d \sin + (a - b) \cos \end{pmatrix} : a, b, c, d \in \C \right\}.
\end{align}
If $B = \{v_2\}$ then all these eigenfunctions vanish on~$B$, so that the operator $\gamma_1$ is trivial. In this case it follows as in the previous examples that $M_{B, 0}$ can be continued analytically into $\lambda = 1$. We have $\dim \ker (A_0 - 1) = 4 = 2 \cyc (G) + |\partial G| - |B| - 1$, which shows that the estimates in Theorem~\ref{thm:generalBoundary} are sharp in general. Similarly, if $B = \partial G = \{v_1, v_2\}$ then there is a $3$-dimensional subspace of~$\ker (A_0 - 1)$, obtained by setting $a = 0$ in~\eqref{eq:lastKernel}, which belongs to $\ker \gamma_1$, and in fact $3 = 2 \cyc (G) + |\partial G| - |B| - 1$. Thus Proposition~\ref{prop:entscheidend} yields $\rank \Res_1 M_{B, 0} = 1$ and we have $\dim \ker (A_0 - 1) = \rank \Res_1 M_{B, 0} + 2 \cyc (G) + |\partial G| - |B| - 1$. Thus the estimate in Theorem~\ref{thm:generalBoundary}~(iii) in general cannot be improved even if~$B = \partial G$. We remark that similar examples can be constructed for graphs with an arbitrary number of 
cycles.
\end{example}

\begin{remark}
Note that, although Theorem~\ref{thm:generalBoundary} above cannot be improved in general, better results can hold for special classes of graphs. For instance, let $G$ consist of a {\em cyclically connected} graph (cf.~\cite{KP11}) and one additional edge with boundary vertex attached to it. Then $\partial G$ consists of one vertex. For standard matching conditions~$\alpha = 0$ and with $B = \partial G$ Proposition~\ref{prop:entscheidend} together with the result of~\cite[Theorem~3.2~(2)]{KP11} implies that each eigenvalue~$\lambda$ with $\dim \ker (A_0 - \lambda) > \cyc (G)$ is a pole of $M_{B, 0}$ with 
\begin{align*}
 \rank \Res_\lambda M_{B, 0} \leq \dim \ker (A_0 - \lambda) \leq \rank \Res_\lambda M_{B, 0} + \cyc (G).
\end{align*}
\end{remark}

We formulate two immediate corollaries of the previous theorem. The first one is known in the case $\alpha = 0$; cf.~\cite{AK08,BW05,FY07}, where in fact stronger results are proved.

\begin{corollary}\label{cor:tree}
Let~$G$ be a finite, compact, connected metric graph with~$\cyc (G) = 0$, that is,~$G$ is a tree, and let $B \subset \partial G$ be nonempty with $|\partial G| - |B| \leq 1$. Moreover, let $\alpha \in \R^s$, let~$A_\alpha$ be the Schr\"odinger operator in~\eqref{eq:Aalpha}, and let~$M_{B, \alpha}$ be the Titchmarsh--Weyl function in Definition~\ref{def:TW}. Then the following assertions hold for each~$\lambda \in \R$.
\begin{enumerate}
	\item $\lambda$ is an eigenvalue of~$A_\alpha$ if and only if $\lambda$ is a pole of~$M_{B, \alpha}$.
	\item $\dim \ker (A_\alpha - \lambda) = \rank \Res_\lambda M_{B, \alpha}$.
\end{enumerate}
\end{corollary}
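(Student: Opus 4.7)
The plan is to deduce this corollary directly from Theorem~\ref{thm:generalBoundary}, which has already done the heavy lifting. The assumption $\cyc(G) = 0$ combined with $|\partial G| - |B| \leq 1$ makes the error term $2\cyc(G) + |\partial G| - |B| - 1$ in the theorem either equal to $-1$ or vanish, forcing the two-sided estimate in part~(iii) to collapse to equality.

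First I would handle assertion~(ii). The proof splits into two cases according to $|\partial G| - |B|$. If $|\partial G| - |B| = 0$, i.e.\ $B = \partial G$, then the first alternative of Theorem~\ref{thm:generalBoundary}(iii) applies and gives exactly $\dim \ker (A_\alpha - \lambda) = \rank \Res_\lambda M_{B, \alpha}$. If instead $|\partial G| - |B| = 1$, then the second alternative of Theorem~\ref{thm:generalBoundary}(iii) provides
\[
 \rank \Res_\lambda M_{B, \alpha} \leq \dim \ker (A_\alpha - \lambda) \leq \rank \Res_\lambda M_{B, \alpha} + 2 \cdot 0 + 1 - 1,
\]
so both inequalities become equalities. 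In either case (ii) is established.

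Next I would derive assertion~(i) from (ii) together with Theorem~\ref{thm:generalBoundary}(i). The forward direction is immediate from Theorem~\ref{thm:generalBoundary}(i): every pole of $M_{B,\alpha}$ is an eigenvalue of $A_\alpha$. Conversely, if $\lambda$ is an eigenvalue then $\dim \ker (A_\alpha - \lambda) \geq 1$, and the equality in (ii) forces $\rank \Res_\lambda M_{B, \alpha} \geq 1$, so $\Res_\lambda M_{B, \alpha} \neq 0$ and $\lambda$ is a (non-removable) pole of~$M_{B,\alpha}$.

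There is no real obstacle here: everything has been absorbed into Theorem~\ref{thm:generalBoundary}. The only thing to notice is that the case $B = \partial G$ must be treated with the first alternative of part~(iii) (since the second alternative is stated only for the complementary situation), so one just needs to be a bit careful about which branch of the dichotomy one is in. Once that observation is made, the proof is a one-line application in each of the two sub-cases.
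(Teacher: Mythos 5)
Your proposal is correct and matches the paper's own (implicit) argument: the corollary is stated there as an immediate consequence of Theorem~\ref{thm:generalBoundary}, obtained exactly as you do by observing that $2\cyc(G)+|\partial G|-|B|-1\le 0$ collapses the two-sided estimate in part~(iii) to an equality, from which both assertions follow.
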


In the second corollary we emphasize the case of a metric graph with possible cycles, where $B$ contains all but at most one boundary vertices.

\begin{corollary}
Let~$G$ be a finite, compact, connected metric graph with $\cyc (G) \geq 1$ and let $B = \partial G$ be nonempty. Moreover, let $\alpha \in \R^s$, let~$A_\alpha$ be the Schr\"odinger operator in~\eqref{eq:Aalpha}, and let~$M_{B, \alpha}$ be the Titchmarsh--Weyl function in Definition~\ref{def:TW}. Then the following assertions hold for each~$\lambda \in \R$.
\begin{enumerate}
	\item If $\lambda$ is a pole of~$M_{B, \alpha}$ then $\lambda$ is an eigenvalue of~$A_\alpha$.
	\item If $\lambda$ is an eigenvalue of~$A_{\alpha}$ with $\dim \ker (A_\alpha - \lambda) > 2 \cyc (G) - 1$ then~$\lambda$ is a pole of~$M_{B, \alpha}$.
	\item The estimates
	\begin{align*}
	 \rank \Res_\lambda M_{B, \alpha} \leq \dim \ker (A_\alpha - \lambda) \leq \rank \Res_\lambda M_{B, \alpha} + 2 \cyc (G) - 1
	\end{align*}
	are satisfied.
\end{enumerate}
\end{corollary}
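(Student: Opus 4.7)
The plan is to deduce all three assertions directly from Theorem~\ref{thm:generalBoundary} by specializing to the choice $B = \partial G$. Under this choice one has $|\partial G| - |B| = 0$, so the quantity $2 \cyc(G) + |\partial G| - |B| - 1$ appearing throughout the theorem collapses to $2 \cyc(G) - 1$. Since by hypothesis $\cyc(G) \geq 1$, we are in particular not in the exceptional case $\cyc(G) = 0$, $B = \partial G$ of Theorem~\ref{thm:generalBoundary}(iii), so the two-sided estimate on $\rank \Res_\lambda M_{B,\alpha}$ is the relevant one.

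Concretely, assertion (i) is nothing but Theorem~\ref{thm:generalBoundary}(i) applied to $B = \partial G$. Assertion (ii) follows by inserting $|\partial G| - |B| = 0$ into the hypothesis and conclusion of Theorem~\ref{thm:generalBoundary}(ii): any eigenvalue $\lambda$ of $A_\alpha$ with $\dim \ker(A_\alpha - \lambda) > 2\cyc(G) - 1$ is automatically a pole of $M_{B,\alpha}$. Assertion (iii) is the second line of Theorem~\ref{thm:generalBoundary}(iii), again with the same substitution, which yields the chain
\begin{align*}
  \rank \Res_\lambda M_{B, \alpha} \leq \dim \ker (A_\alpha - \lambda) \leq \rank \Res_\lambda M_{B, \alpha} + 2 \cyc (G) - 1.
\end{align*}

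There is essentially no obstacle here; the content of this corollary lies entirely in Theorem~\ref{thm:generalBoundary}, and the only thing to check is the arithmetic simplification $|\partial G| - |B| = 0$ together with the observation that the hypothesis $\cyc(G) \geq 1$ rules out the special tree case so that the uniform estimate of item (iii) of the theorem applies verbatim.
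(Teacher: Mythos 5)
Your proof is correct and matches the paper's approach exactly: the paper states this as an immediate corollary of Theorem~\ref{thm:generalBoundary}, obtained by the same substitution $|\partial G| - |B| = 0$, with the hypothesis $\cyc(G) \geq 1$ excluding the special tree case of item (iii).
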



\section{The inverse problem for a larger vertex set}\label{sec:large}

In this section we study the situation that the knowledge of the Titchmarsh--Weyl function is available on a large vertex set~$B$ which may contain non-boundary vertices of~$G$. We start with an example which shows that even if~$B$ contains all vertices of~$G$ the Titchmarsh--Weyl function does in general not contain the full spectral information.

\begin{example}\label{ex:counterex22}
We revisit the graph in Example~\ref{ex:counterex} and Figure~\ref{fig:loop} above. We impose the same assumptions as in Example~\ref{ex:counterex}, but, in contrast to that example, we choose~$B = \{v_1, v_2\}$, that is,~$B$ consists of all vertices of the graph. Then it is obvious from the representation~\eqref{eq:EspaceExample} of the eigenspace~$\ker (A_0 - \lambda)$ for~$\lambda = k^2 \pi^2/L_2^2$ with $k = 2, 4, \dots$ that $f (v_1) = f (v_2) = 0$ for all $f \in \ker (A_0 - \lambda)$, hence the operator~$\gamma_\lambda$ defined in~\eqref{eq:gammaLambda} is trivial. Therefore Proposition~\ref{prop:entscheidend} implies $\ran \Res_\lambda M_{B, 0} = \{0\}$, that is, $M_{B, 0}$ can be continued analytically into~$\lambda$. Thus the eigenvalues~$k^2 \pi^2/L_2^2$ with ~$k = 2, 4, \dots$ are invisible for the function~$M_{B, 0}$, even though $B$ contains all vertices of~$G$.
\end{example}

In the following we give another example, where certain eigenvalues are visible but the Titchmarsh--Weyl function does not exhibit their full multiplicities.

\begin{example}\label{ex:counterex33}
Let $G$ be the metric graph without boundary which consists of two vertices $v_1, v_2$ and two edges $I_1 = I_2 = [0, \pi]$ such that $o (I_1) = t (I_2) = v_1$ and $o (I_2) = t (I_1) = v_2$, see Figure~\ref{fig:Doppelkante}.
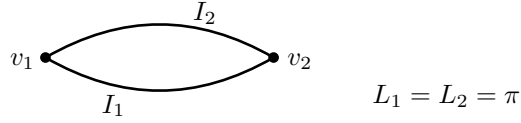
\begin{figure}[h]
\begin{center}
\begin{tikzpicture}
\pgfsetlinewidth{1pt}
\color{black}
\fill (2.5,0) circle (2pt);
\fill (5.5,0) circle (2pt);
\draw[bend left=30] (2.5,0) to (5.5, 0);
\draw[bend right=30] (2.5,0) to (5.5, 0);
\pgfputat{\pgfxy(3.4,-0.73)}{\pgfbox[center,base]{$I_1$}}
\pgfputat{\pgfxy(4.6, 0.5)}{\pgfbox[center,base]{$I_2$}}
\pgfputat{\pgfxy(2.2, -0.1)}{\pgfbox[center,base]{$v_1$}}
\pgfputat{\pgfxy(5.85,-0.1)}{\pgfbox[center,base]{$v_2$}}
\end{tikzpicture} \hspace{10mm} $L_1 = L_2 = \pi$
\end{center}
\caption{The metric graph~$G$ in Example~\ref{ex:counterex33} with a double edge connecting~$v_1$ and~$v_2$.}
\label{fig:Doppelkante}
\end{figure}
Moreover, let again $\cL$ be the Laplacian on~$G$ and consider the selfadjoint operator~$A_0$ subject to standard matching conditions at $v_1$ and $v_2$. Let us choose $B = \{v_1, v_2\}$. It is easy to check that 
\begin{align*}
 \sigma (A_0) = \left\{k^2 : k = 0, 1, 2, \dots \right\}
\end{align*}
and
\begin{align*}
 \ker (A_0 - k^2) = \spann \big\{ f^{(k)}, g^{(k)} \big\}, \quad k = 0, 1, 2, \dots,
\end{align*}
where
\begin{align*}
 f^{(k)} (x) = \binom{\sin (k x)}{\sin (k (x + \pi))} \quad \text{and} \quad g^{(k)} (x) = \binom{\cos (k x)}{\cos (k (x + \pi))}, \quad k = 0, 1, 2, \dots
\end{align*}
In particular, the eigenvalue~$0$ has multiplicity one and all other eigenvalues have multiplicity two. Since the functions $f^{(k)}$ vanish at~$v_1$ and~$v_2$, it follows that the operator~$\gamma_\lambda$ in~\eqref{eq:gammaLambda} has a one-dimensional range for $\lambda = k^2$, $k = 0, 1, 2, \dots$, and Proposition~\ref{prop:entscheidend} implies
\begin{align*}
 \rank \Res_{k^2} M_{B, 0} = 1, \quad k = 0, 1, 2, \dots
\end{align*}
Thus the poles of~$M_{B, 0}$ coincide with the eigenvalues of $A_0$, but it is not possible to read off the multiplicities of the eigenvalues from~$M_{B, 0}$.
\end{example}

In the following we provide a sufficient condition under which the Titchmarsh--Weyl function reflects all eigenvalues of~$A_\alpha$ with full multiplicities. This criterion requires the following definition.

\begin{definition}
Let~$G$ be a finite, compact metric graph. 
\begin{enumerate}
 \item The {\em core} of~$G$ is the largest subgraph of~$G$ which does not have vertices of degree one. 
 \item We call a vertex~$v$ of~$G$ {\em proper core vertex} if all edges attached to~$v$ belong to the core of~$G$.
\end{enumerate}
\end{definition}

Note that each finite, compact graph is the union of its core and a collection of disjoint rooted trees whose roots are vertices of the core (but, of course, are not proper core vertices). The core of~$G$ is empty if and only if each connected component of~$G$ is a tree. In Figure~\ref{fig:core} we illustrate an example of a graph, where the core is marked in black and the proper core vertices are drawn empty.

\begin{figure}[h]
\begin{center}
\begin{tikzpicture}
\pgfsetlinewidth{1pt}
\color{lightgray}
\draw (4.5,0) circle (5mm);
\draw (4.5,0) circle (10mm);
\pgfxyline(1.3,0)(4,0)
\pgfxyline(4.83,0.37)(6.23,1.77)
\pgfxyline(4.83,-0.37)(6.23,-1.77)
\fill (4,0) circle (2pt);
\fill (4.83,0.37) circle (2pt);
\fill (4.83,-0.37) circle (2pt);
\fill (1.3,0) circle (2pt);
\fill (6.23,1.77) circle (2pt);
\fill (6.23,-1.77) circle (2pt);
\fill (5.19,0.72) circle (2pt);
\fill (5.19,-0.72) circle (2pt);
\fill (3.5,0) circle (2pt);
\pgfxyline(2.5,0)(1.3,1);
\pgfxyline(2.5,0)(1.3,-1);
\pgfxyline(3.5,0)(2.3,-1);
\fill (2.5,0) circle (2pt);
\fill (1.3,1) circle (2pt);
\fill (1.3,-1) circle (2pt);
\fill (2.3,-1) circle (2pt);
\pgfxyline(2.5,0)(4,0)
\color{black}
\draw (4.5,0) circle (5mm);
\draw (4.5,0) circle (10mm);
\pgfxyline(3.5,0)(4,0)
\pgfxyline(4.83,0.37)(5.19,0.72)
\pgfxyline(4.83,-0.37)(5.19,-0.72)
\fill (4,0) circle (2pt);
\fill (4.83,0.37) circle (2pt);
\fill (4.83,-0.37) circle (2pt);
\fill (5.19,0.72) circle (2pt);
\fill (5.19,-0.72) circle (2pt);
\fill (3.5,0) circle (2pt);
\color{white}
\fill (4,0) circle (1pt);
\fill (4.83,0.37) circle (1pt);
\fill (4.83,-0.37) circle (1pt);
\pgfputat{\pgfxy(3.4,0.73)}{\pgfbox[center,base]{$C$}}
\end{tikzpicture}
\end{center}
\caption{A graph with its core~$C$ marked in black and the proper core vertices drawn with empty dots.}
\label{fig:core}
\end{figure}
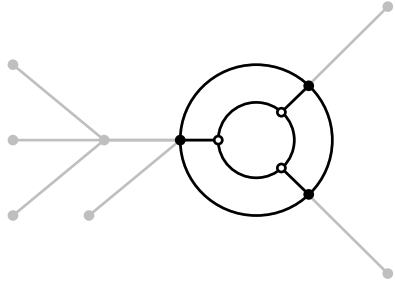

In order to formulate the next theorem a further definition is needed; cf.~also~\cite{K09}.

\begin{definition}
Let~$G$ be a finite, compact metric graph with edges~$I_1, \dots, I_r$ and let $q_j \in L^1 (I_j)$ be real-valued, $j = 1, \dots, r$. Moreover, let~$C$ be the core of~$G$. 
\begin{enumerate}
 \item A number $\lambda \in \R$ is called a {\em Dirichlet eigenvalue} of an edge $I_j$ if $\lambda$ is an eigenvalue of the selfadjoint operator $- \frac{d^2}{d x^2} + q_j$ in $L^2 (I_j)$ subject to Dirichlet boundary conditions $f_j (0) = f_j (L_j) = 0$.
 \item A number~$\lambda \in \R$ is called a {\em resonance} if there exists a cycle in~$G$ such that $\lambda$ is a Dirichlet eigenvalue of each edge in this cycle.
\end{enumerate}
\end{definition}

The following theorem is the main result of this section.

\begin{theorem}\label{thm:resonanceSingle}
Let $G$ be a finite, compact metric graph and let $B$ be a set of vertices of~$G$ which contains all boundary vertices and all proper core vertices of~$G$. Moreover, let~$\alpha \in \R^s$, let~$A_\alpha$ be the Schr\"odinger operator in~\eqref{eq:Aalpha}, and let~$M_{B, \alpha}$ be the Titchmarsh--Weyl function in Definition~\ref{def:TW}. Then for each $\lambda \in \R$ which is not a resonance the following assertions hold.
\begin{enumerate}
	\item $\lambda$ is an eigenvalue of~$A_\alpha$ if and only if~$\lambda$ is a pole of~$M_{B, \alpha}$.
	\item $\dim \ker (A_\alpha - \lambda) = \rank \Res_\lambda M_{B,\alpha}$.
\end{enumerate}
\end{theorem}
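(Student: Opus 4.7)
The plan is to reduce the theorem, via Proposition~\ref{prop:entscheidend}, to showing that $\ker \gamma_\lambda = \{0\}$ under the non-resonance hypothesis. Indeed, the identity $\dim \ker(A_\alpha - \lambda) = \rank \Res_\lambda M_{B,\alpha} + \dim \ker \gamma_\lambda$ together with the fact that $\lambda$ is a pole of $M_{B,\alpha}$ iff $\rank \Res_\lambda M_{B,\alpha} > 0$ immediately yields both assertions once $\ker \gamma_\lambda$ is shown trivial. So I fix an arbitrary $f \in \ker(A_\alpha - \lambda)$ with $f(v) = 0$ for every $v \in B$ and aim to conclude $f \equiv 0$.

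The argument splits cleanly into a pendant-tree reduction and a core analysis. For the first, each maximal subtree $T$ of $G$ attached to the core at a single mixed root $v_0$ has all its leaves inside $\partial G \subseteq B$. At such a leaf $\ell$, combining $f(\ell) = 0$ with the $\delta$-condition $\partial_\nu f(\ell) = \alpha_\ell f(\ell)$ yields Cauchy data $f = f' = 0$ on the unique edge at $\ell$, so ODE uniqueness makes $f$ vanish on that edge. I then propagate toward $v_0$ in depth-decreasing order: at any internal vertex $v \in T$, once all edges to descendants have been zeroed, continuity gives $f(v) = 0$ and the $\delta$-matching at $v$---which involves only $T$-edges since $v \notin C$---collapses to $\pm f'|_e(v) = 0$ on the single remaining parent edge, which ODE uniqueness then zeroes as well. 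Thus $f|_T \equiv 0$ and in particular $f(v_0) = 0$; combined with the hypothesis that $f$ vanishes at each proper core vertex, this yields $f = 0$ at every vertex of the core $C$.

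The core analysis is where the non-resonance hypothesis enters. Let $C_D$ denote the subgraph of $C$ consisting of those edges on which $\lambda$ is a Dirichlet eigenvalue. The non-resonance hypothesis is precisely the statement that no cycle of $G$ lies entirely inside $C_D$, so $C_D$ is a forest. For each edge $e \in C \setminus C_D$ the two endpoint values of $f$ vanish and $\lambda$ is not a Dirichlet eigenvalue of $e$, forcing $f|_e \equiv 0$. For the remaining edges I apply iterative leaf-pruning on the forest $C_D$: at any $C_D$-leaf $v$ (a vertex incident to exactly one $C_D$-edge $e$), every other edge at $v$---a pendant-tree edge or an edge of $C \setminus C_D$---already satisfies $f \equiv 0$, so the $\delta$-matching at $v$ collapses to $\pm f'|_e(v) = 0$; together with $f(v) = 0$, ODE uniqueness gives $f|_e \equiv 0$, after which $e$ is removed from $C_D$. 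Since $C_D$ is a finite forest, this procedure exhausts all edges, giving $f \equiv 0$ on the whole graph. The main obstacle is this structural translation of the non-resonance hypothesis: without it $C_D$ could contain a cycle and support an eigenfunction---a resonance "scar"---vanishing on all of $B$, as illustrated by Examples~\ref{ex:counterex22} and~\ref{ex:counterex33}. A subsidiary technical point is the depth-first ordering in the pendant-tree step, which ensures that only one undetermined outward derivative remains when each internal matching equation is invoked.
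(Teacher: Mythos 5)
Your proof is correct, and its overall architecture matches the paper's: both reduce the theorem via Proposition~\ref{prop:entscheidend} to showing $\ker\gamma_\lambda=\{0\}$, both first kill $f$ on the pendant trees hanging off the core (the paper by citing Step~1 of the proof of Theorem~\ref{thm:generalBoundary}, you by an equivalent explicit leaf-to-root propagation), and both then exploit that $f$ vanishes at every vertex of the core. Where you genuinely diverge is the core step. The paper argues by contradiction: starting from an edge where $f\neq 0$, the matching conditions force a neighbouring core edge with $f\neq 0$ at each end, and finiteness of the core forces this path of non-vanishing edges to close into a cycle, on which $\lambda$ is then a Dirichlet eigenvalue of every edge --- a resonance. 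You instead argue directly: you isolate the subgraph $C_D$ of core edges having $\lambda$ as a Dirichlet eigenvalue, observe that non-resonance is exactly the statement that $C_D$ is a forest, annihilate $f$ on $C\setminus C_D$ immediately from the vanishing endpoint values, and then leaf-prune the finite forest $C_D$ using the collapsed $\delta$-matching condition at each successive leaf. The two arguments rest on the same mechanism (an edge supporting a nonzero eigenfunction with zero endpoint values must be a Dirichlet edge, and a zero outward-derivative sum at a vertex with only one undetermined edge yields zero Cauchy data), but your forest formulation makes the termination of the induction and the precise role of the non-resonance hypothesis more transparent than the paper's ``repeat the procedure until a cycle appears'' step. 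One small imprecision: in the pruning loop the ``other edges at $v$'' can also be previously pruned $C_D$-edges, not only pendant-tree edges and edges of $C\setminus C_D$; since those are already zeroed this costs nothing, but the statement should be phrased relative to the current (shrinking) forest.
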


\begin{proof}
Let $\lambda \in \R$ such that~$\lambda$ is not a resonance. Moreover, let the operator~$\gamma_\lambda$ be defined as in~\eqref{eq:gammaLambda}. Assume there exists $f \in \ker \gamma_\lambda \setminus \{0\}$, that is, $f \in \ker (A_\alpha - \lambda)$, $f (v_{i_1}) = \dots = f (v_{i_m}) = 0$, where $\{v_{i_1}, \dots, v_{i_m} \} = B$, and $f_{j_0} \neq 0$ on $I_{j_0}$ for some~$j_0 \in \{1, \dots, r\}$. Then~$I_{j_0}$ belongs to the core~$C$ of~$G$. Indeed, we can decompose~$G$ into its core~$C$ and a collection of disjoint trees rooted at vertices of~$C$. If~$G'$ is one of these trees then all but at most one vertices in~$\partial G'$ belong to~$\partial G$; in particular, $f (v) = 0$ for all but at most one $v \in \partial G'$, and the same reasoning as in Step~1 of the proof of Theorem~\ref{thm:generalBoundary} yields that~$f$ vanishes identically on~$G'$ and, hence, on each of the trees. Since $f$ is continuous this implies, moreover, $f (v_i) = 0$ for $i = 1, \dots, s$; in particular, $f_{j_0}$ 
vanishes at both endpoints of~$I_{j_0}$. Since $f_{j_0} \neq 0$ and~$f_{j_0}$ satisfies~$- f_{j_0}'' + q_{j_0} f_{j_0} = \lambda f_{j_0}$ on~$I_{j_0}$ it follows that $f_{j_0}' (L_{j_0}) \neq 0$. Moreover, as~$f$ satisfies the matching condition $\partial_\nu f (t (I_{j_0})) = \alpha_{j_0} f (t (I_{j_0})) = 0$ and~$t (I_{j_0})$ is not a boundary vertex, there exists $j_1 \in \{1, \dots, r\}$ such that~$j_1 \neq j_0$, $I_{j_1}$ is adjacent to~$I_{j_0}$ with joint vertex $t (I_{j_0})$ and belongs to~$C$, and $f_{j_1} \neq 0$. Analogously, there exists $j_{-1}$ such that $I_{j_{-1}}$ is adjacent to~$I_{j_0}$ with joint vertex $o (I_{j_0})$ and $f_{j_{-1}} \neq 0$. By repeating this procedure similar to Step~1 in the proof of Theorem~\ref{thm:generalBoundary}, since the core of~$G$ consists of finitely many edges we obtain~$n, p \in \N$ such that $I_{j_{-n}}, \dots, I_{j_p}$ or a part of this path form a cycle and $f_j \neq 0$ for all $j \in \{j_{-n}, \dots, j_p\}$. Thus $\lambda$ is a Dirichlet eigenvalue of 
all edges in that cycle, which means that~$\lambda$ is a resonance, a contradiction. It follows~$\ker \gamma_\lambda = \{0\}$ and Proposition~\ref{prop:entscheidend} immediately leads to the assertions of the theorem.
\end{proof}

Let us illustrate the choice of the vertex set~$B$ in the previous theorem by an example.

\begin{example}\label{ex:Kurasov}
The graph~$G$ in Figure~\ref{fig:Kurasov} consisting of one cycle and three boundary edges attached to it was considered in~\cite{K09}, where it was shown that the Titchmarsh--Weyl function for $B = \partial G = \{v_1, v_2, v_3\}$ determines the potentials on the edges of~$G$ uniquely, provided there are no resonances. As this graph does not possess proper core vertices, under the same conditions Theorem~\ref{thm:resonanceSingle} yields the (weaker) result that on this graph the Titchmarsh--Weyl function determines all eigenvalues and its multiplicities.
\begin{figure}[h]
\begin{center}
\begin{tikzpicture}
\pgfsetlinewidth{1pt}
\color{black}
\draw (4.5,0) circle (5mm);
\pgfxyline(3,0)(4,0)
\pgfxyline(4.83,0.37)(5.53,1.07)
\pgfxyline(4.83,-0.37)(5.53,-1.07)
\fill (4,0) circle (2pt);
\fill (4.83,0.37) circle (2pt);
\fill (4.83,-0.37) circle (2pt);
\fill (3,0) circle (2pt);
\fill (5.53,1.07) circle (2pt);
\fill (5.53,-1.07) circle (2pt);
\pgfputat{\pgfxy(2.8,0.2)}{\pgfbox[center,base]{$v_1$}}
\pgfputat{\pgfxy(3.8,0.2)}{\pgfbox[center,base]{$v_4$}}
\pgfputat{\pgfxy(5.8,1.2)}{\pgfbox[center,base]{$v_2$}}
\pgfputat{\pgfxy(4.8,0.6)}{\pgfbox[center,base]{$v_5$}}
\pgfputat{\pgfxy(5.8,-1)}{\pgfbox[center,base]{$v_3$}}
\pgfputat{\pgfxy(5.2,-0.4)}{\pgfbox[center,base]{$v_6$}}
\end{tikzpicture}
\end{center}
\caption{A graph whose core consists of one cycle.}
\label{fig:Kurasov}
\end{figure}
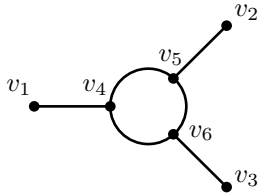
\end{example}

Observe that for the Laplacian on~$G$ without potentials the Dirichlet eigenvalues on an edge $I_j$ are given by $\frac{k^2 \pi^2}{L_j^2}$ for $k = 1, 2, \dots$ Thus in this case Theorem~\ref{thm:resonanceSingle} implies the following corollary. 

\begin{corollary}\label{cor:LaplaceEV}
Let $G$ be a finite, compact metric graph and let $B$ contain all boundary vertices and all proper core vertices of~$G$. Moreover, let $\cL$ be the Laplacian on $G$, i.e., $q_j = 0, j = 1, \dots, r$. Let~$\alpha \in \R^s$, let~$A_\alpha$ be the Schr\"odinger operator in~\eqref{eq:Aalpha}, and let~$M_{B, \alpha}$ be the Titchmarsh--Weyl function in Definition~\ref{def:TW}. Assume that each cycle in~$G$ contains two edges with rationally independent edge lengths. Then the assertions of Theorem~\ref{thm:resonanceSingle} hold for each $\lambda \in \R$.
\end{corollary}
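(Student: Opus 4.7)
The plan is to reduce the statement directly to Theorem~\ref{thm:resonanceSingle} by showing that, under the rational independence hypothesis, \emph{no} real number $\lambda$ is a resonance. Once this is established, Theorem~\ref{thm:resonanceSingle} immediately gives both conclusions~(i) and~(ii) for every $\lambda \in \R$, because the exceptional set in that theorem becomes empty.

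First I would recall what a resonance is in this setting. Since $q_j \equiv 0$ on each edge, the Dirichlet eigenvalues of $I_j = [0, L_j]$ are precisely the numbers $k^2 \pi^2 / L_j^2$ for $k = 1, 2, \dots$ A resonance is therefore a $\lambda \in \R$ for which there is a cycle in $G$ such that $\lambda = k_j^2 \pi^2 / L_j^2$ for some positive integer $k_j$ for \emph{every} edge $I_j$ in that cycle.

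Next I would use the hypothesis. Let $c$ be any cycle in $G$ and let $I_a$, $I_b$ be two edges of $c$ with rationally independent lengths $L_a, L_b$. Suppose, for contradiction, that some $\lambda \in \R$ is a resonance realized on $c$. Then there exist positive integers $k_a, k_b$ with
\begin{align*}
 \lambda = \frac{k_a^2 \pi^2}{L_a^2} = \frac{k_b^2 \pi^2}{L_b^2},
\end{align*}
which forces $L_b / L_a = k_b / k_a \in \Q$, contradicting the rational independence of $L_a$ and $L_b$. Hence no cycle in $G$ can produce a resonance, so the set of resonances is empty.

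With this in hand, the corollary follows directly: every $\lambda \in \R$ satisfies the hypothesis of Theorem~\ref{thm:resonanceSingle}, and so assertions~(i) and~(ii) hold for all real $\lambda$. The whole argument is essentially a one-line contradiction from the defining property of rational independence combined with the explicit formula for Dirichlet eigenvalues of the free Laplacian on an interval; there is no substantive obstacle, the only thing worth being careful about is that the definition of resonance requires $\lambda$ to be a Dirichlet eigenvalue of \emph{each} edge of the cycle, so it suffices to exhibit two edges of the cycle whose lengths rule out a common Dirichlet eigenvalue.
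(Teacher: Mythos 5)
Your proposal is correct and follows essentially the same route as the paper: the paper deduces the corollary from Theorem~\ref{thm:resonanceSingle} by noting that the Dirichlet eigenvalues of an edge $I_j$ with $q_j = 0$ are exactly $k^2\pi^2/L_j^2$, so two edges of a cycle with rationally independent lengths cannot share a Dirichlet eigenvalue and hence no resonances occur. Your write-up just makes explicit the one-line contradiction that the paper leaves implicit.
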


\section*{Appendix: Analytic operator functions}

In this appendix we shortly recall basic facts and notions on analytic matrix and operator functions as used in the main part of this paper. For further details we refer the reader to~\cite{DS71}.

Let $\cH$ be a complex Hilbert space, which may be finite- or infinite-dimensional. Moreover, let $G \subset \C$ be a nonempty, open set and let $R (z)$ be a bounded, everywhere defined operator in~$\cH$ (a matrix if $\dim \cH < \infty$) for each $z \in G$. Assume that the operator function $R$ is analytic, that is, it can be represented locally by a power series which converges with respect to the operator topology. We say that some~$\lambda \in \C$ is a pole of~$R$ of order~$n$ if there exists an open neighborhood~$B$ of~$\lambda$ in~$\C$ such that $\overline{B} \setminus \{\lambda\} \subset G$, 
\begin{align*}
 \lim_{\mu \to \lambda} (\mu - \lambda)^n R (\mu)~\text{exists~and~is~nontrivial}, \quad \text{and} \quad \lim_{\mu \to \lambda} (\mu - \lambda)^{n + 1} R (\mu) = 0
\end{align*}
in the operator topology. The function~$R$ is called meromorphic if~$\C \setminus G$ consists of isolated points which are poles; this may include poles of order zero, i.e., removable singularities. For any~$\lambda \in \C$ the residue of~$R$ at~$\lambda$ is the bounded linear operator in~$\cH$ given by
\begin{align*}
 \Res_\lambda R = \frac{1}{2 \pi i} \int_\Gamma R (\mu) d \mu,
\end{align*}
where~$\Gamma$ is any closed Jordan curve in~$G$ which surrounds $\lambda$ but no other point in~$\C \setminus G$. If $\lambda \in G$ or $\lambda$ is a removable singularity of~$R$ then $\Res_\lambda R = 0$; if~$R$ is meromorphic then $\lambda$ is a pole of~$R$ of positive order if and only if~$\Res_\lambda R$ is nontrivial. Alternatively, the residue can be defined as the first coefficient of negative order in the Laurent series expansion of~$R$ centered at~$\lambda$. If~$\lambda$ is a pole of order one of~$R$ then $\Res_\lambda R = \lim_{\mu \to \lambda} (\mu - \lambda) R (\mu)$.

As a standard example, let~$A$ be a (not necessarily bounded) selfadjoint operator in~$\cH$ with a compact resolvent; in particular, the spectrum $\sigma (A)$ consists of isolated eigenvalues with finite multiplicities. In that case the function $\mu \mapsto R (\mu) = (A - \mu)^{-1}$, $\mu \in \C \setminus \sigma (A)$, is meromorphic with poles of order one precisely at the eigenvalues of~$A$. Moreover, for each $\lambda \in \R$ the relation
\begin{align}\label{eq:projection}\tag{A.1}
 P_\lambda = - \Res_\lambda R
\end{align}
holds, where $P_\lambda$ denotes the orthogonal projection in~$\cH$ onto~$\ker (A - \lambda)$.

\end{document}